\numberwithin{equation}{section}
\theoremstyle{plain}
\newtheorem{Prop}{Proposition}[section]
\newtheorem{Thm}[Prop]{Theorem}
\newtheorem*{Thm*}{Theorem}
\newtheorem{Lem}[Prop]{Lemma}
\newtheorem{Cor}[Prop]{Corollary}
\theoremstyle{definition}
\theoremstyle{remark}
\newtheorem{Rem}[Prop]{Remark}
\def\vint_#1{\mathchoice%
          {\mathop{\kern 0.2em\vrule width 0.6em height 0.69678ex
depth -0.58065ex
                  \kern -0.8em \intop}\nolimits_{\kern -0.4em#1}}%
          {\mathop{\kern 0.1em\vrule width 0.5em height 0.69678ex
depth -0.60387ex
                  \kern -0.6em \intop}\nolimits_{#1}}%
          {\mathop{\kern 0.1em\vrule width 0.5em height 0.69678ex
              depth -0.60387ex
                  \kern -0.6em \intop}\nolimits_{#1}}%
          {\mathop{\kern 0.1em\vrule width 0.5em height 0.69678ex
depth -0.60387ex
                  \kern -0.6em \intop}\nolimits_{#1}}}
\def\vintslides_#1{\mathchoice%
          {\mathop{\kern 0.1em\vrule width 0.5em height 0.697ex depth -0.581ex
                  \kern -0.6em \intop}\nolimits_{\kern -0.4em#1}}%
          {\mathop{\kern 0.1em\vrule width 0.3em height 0.697ex depth -0.604ex
                  \kern -0.4em \intop}\nolimits_{#1}}%
          {\mathop{\kern 0.1em\vrule width 0.3em height 0.697ex depth -0.604ex
                  \kern -0.4em \intop}\nolimits_{#1}}%
          {\mathop{\kern 0.1em\vrule width 0.3em height 0.697ex depth -0.604ex
                  \kern -0.4em \intop}\nolimits_{#1}}}
\newcommand{\intav}{\vint}%----------------------------------------------------------------------integral average
\newcommand{\aveint}[2]{\mathchoice%--------------------------------------------------------integral average on reals
          {\mathop{\kern 0.2em\vrule width 0.6em height 0.69678ex
depth -0.58065ex
                  \kern -0.8em \intop}\nolimits_{\kern -0.45em#1}^{#2}}%
          {\mathop{\kern 0.1em\vrule width 0.5em height 0.69678ex
depth -0.60387ex
                  \kern -0.6em \intop}\nolimits_{#1}^{#2}}%
          {\mathop{\kern 0.1em\vrule width 0.5em height 0.69678ex
depth -0.60387ex
                  \kern -0.6em \intop}\nolimits_{#1}^{#2}}%
          {\mathop{\kern 0.1em\vrule width 0.5em height 0.69678ex
depth -0.60387ex
                  \kern -0.6em \intop}\nolimits_{#1}^{#2}}}
\DeclareMathOperator{\dv}{div}
\DeclareMathOperator{\osc}{osc}
\DeclareMathOperator{\dist}{dist}
\newcommand{\set}[2]{\left\{#1 : #2\right\}}%---------------------set comprehension
\newcommand{\mns}{\setminus}%------------------------------set excision
\newcommand{\1}{\mathbbm{1}}%--------------------charactersitic function
\newcommand{\N}{\mathbb{N}}%----------------------------------------natural numbers
\newcommand{\R}{\mathbb{R}}%----------------------------------------real numbers
\newcommand{\del}{\partial}%------------------------------------------------topological boundary
\newcommand{\eps}{\varepsilon}%-----------------------------------------epsilon
\newcommand{\dx}{\, dx}%-----------------------------------------dx after integral
\newcommand{\loc}{\text{\rm loc}}%-----------------------------------loc as subscript
\newcommand{\Om}{\Omega}%--------------------------------------------------------usually domain
\newcommand{\inp}[2]{\big\langle #1,#2\big\rangle}%-----------------inner product/duality action
\newcommand{\gr}{\nabla}%-----------------------------------------------------gradient
\newcommand{\lap}{\Delta}%------------------------------------------------Laplacian
\newcommand{\hh}{\mathbb{H}}%------------------------------------------- Heisenberg group
\newcommand{\X}{\mathfrak{X} }%------------------------------------------Horizontal gradient
\newcommand{\Xu}{\X u}
\newcommand{\laph}{\lap\,_{\!_H}}%------------------------------------------Sub-Laplacian
\newcommand{\XX}{\X\X}%--------------------------------------------Horizontal Hessian
\newcommand{\dvh}{\dv_{\!_H}}%--------------------------------------------Horizontal divergence
\newcommand{\dhh}{d_{\hh^n}}%----------------------H metric
\newcommand{\normh}[1]{\|#1\|_{\hh^n}}%-------H  norm
\newcommand{\W}[2]{\boldsymbol{W}^\mu_{#1,#2}}%--------------------Wolff potential
\title[Quasi-linear equations on the Heisenberg Group]
{Regularity of inhomogeneous Quasi-linear equations on the Heisenberg Group}
\author[Shirsho Mukherjee]{Shirsho Mukherjee}
\address[S.\ Mukherjee]{Department of Mathematics, 
University of Bologna, Piazza di Porta San Donato 5,
  40126 - Bologna (BO), Italy.}
\email{shirsho.mukherjee2@unibo.it}
\author[Yannick Sire]{Yannick Sire}
\address[Y.\ Sire]{Department of Mathematics, 
Johns Hopkins University, 3400 N. Charles Street, Baltimore MD 21218, USA.}
\email{sire@math.jhu.edu}
\begin{document}

\begin{abstract}
We establish H\"older continuity of the horizontal gradient of weak solutions to quasi-linear $p$-Laplacian type non-homogeneous equations in the Heisenberg Group.
\end{abstract}

\maketitle
\setcounter{tocdepth}{1}
\phantomsection
%\addcontentsline{toc}{section}{Contents}
\tableofcontents

\section{Introduction}\label{sec:Introduction}
The $C^{1,\alpha}$ regularity of the $p$-Laplacian has been established earlier in, for instance \cite{Dib,Lewis,Tolk} in the Euclidean setting. 
Its {\sl sub-elliptic} analogue for homgeneous sub-elliptic equations of $p$-Laplacian type on the Heisenberg group, was unavailable until \cite{Zhong, Muk-Zhong}, in the last years.   
It is therefore natural to consider the case of regularity for the corresponding inhomogeneous equation and this is the purpose of the present contribution.

In this paper, we consider the equation
\begin{equation}\label{eq:maineq}
-\dvh a(x, \X u) = \mu \quad\text{in}\ \Om\subseteq\hh^n, 
\end{equation}
where $\Om$ is a domain and $\mu$ is a Radon measure with 
$|\mu|(\Om)<\infty$ and $\mu(\hh^n\mns\Om)=0$; hence the equation \eqref{eq:maineq} can be considered as defined in all of $\hh^n$. 
Here we denote $\X u  = (X_1u,\ldots, X_{2n}u)$ as the horizontal gradient of $u:\Om\to\R$, see Section \ref{sec:Prelim}. 

We shall take up the following structural assumptions throughout the paper: the continuous function $a:\Om\times\R^{2n}\to\R^{2n}$ is assumed to be 
$C^1$ in the gradient variable and satisfies the following structure condition 
for every $x,y\in\Om$ and $z,\xi\in \R^{2n}$, 
\begin{equation}\label{eq:str} 
\begin{aligned}
(|z|^2+s^2)^\frac{p-2}{2}|\xi|^2\leq 
\inp{D_z a(x,z)\xi}{\xi}&\leq L (|z|^2+s^2)^\frac{p-2}{2}|\xi|^2; \\
|a(x,z)-a(y,z)|&\leq L' |z|(|z|^2+s^2)^\frac{p-2}{2}|x-y|^\alpha,
\end{aligned}
\end{equation}
where $L,L'\geq 1, s\geq 0$, $\alpha\in(0,1]$ and $D_z a(x,z)$ is a symmetric matrix for every $x\in\Om$. The sub-elliptic $p$-Laplacian 
equation with measure data, given by 
\begin{equation}\label{eq:eq}
-\dvh(|\Xu|^{p-2}\X u) = \mu, 
\end{equation}
is a prototype of the equation \eqref{eq:maineq} with the condition  
\eqref{eq:str} for the case $s=0$. The weak solutions of \eqref{eq:maineq} are defined in horizontal Sobolev space $HW^{1,p}(\Om)$; the Lipschitz and  H\"older classes, denoted by same classical notations, are defined with respect to the CC-metric $(x,y)\mapsto d(x,y)$, see Section \ref{sec:Prelim} for details. We shall denote $Q=2n+2$ as the homogeneous dimension.
Now we state our main result.

\begin{Thm}\label{thm:c1au}
Let $u\in HW^{1,p}(\Om)$ be a weak solution of equation \eqref{eq:maineq} with $p\geq 2$ and a $C^1$-function $a:\Om\times\R^{2n}\to \R^{2n}$ satisfying the structure condition \eqref{eq:str}. If we have 
$\mu= f\in L^q_\loc(\Om)$ for some $q>Q$, then $\X u$ is locally H\"older continuous and there exists $c=c(n,p,L)>0$ and 
${\bar R}={\bar R}(n,p,L,L',\alpha,q,\dist(x_0,\del\Om))>0$ such that for any $x_0\in\Om,\ 0< R\leq \bar R$ and $x,y\in B_R(x_0)\subset\Om$, the estimate 
\begin{equation}\label{eq:ciau}
 |\X u(x)-\X u(y)| \leq c\, d(x,y)^\gamma \bigg(\intav_{B_R(x_0)}(|\X u|+s)\dx + \big\|f \big\|_{L^q(B_R(x_0))}^{1/(p-1)}\bigg),
\end{equation}
holds for some $\gamma=\gamma(n,p,L,\alpha, q)\in (0,1)$. In in particular, if $a(x,z)$ is independent of $x$, then \eqref{eq:ciau} holds 
for ${\bar R}={\bar R}(n,p,L,\dist(x_0,\del\Om))>0$ and $\gamma(n,p,L, q)\in (0,1)$. 
\end{Thm}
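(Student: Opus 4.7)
The plan is a perturbation/freezing argument paired with a Campanato-type excess-decay iteration, in the spirit of the Euclidean treatment of \cite{Dib} combined with the homogeneous sub-Riemannian regularity results of \cite{Zhong,Muk-Zhong}. Fix $x_0\in\Om$ and work on small balls $B_r=B_r(x_0)\subset\Om$, with $r$ eventually taken below a threshold depending on the data. First I would introduce the comparison map $v\in u+HW^{1,p}_0(B_r)$ defined as the unique weak solution of the homogeneous frozen-coefficient problem
\begin{equation*}
-\dvh a(x_0,\X v)=0\ \text{ in }\ B_r,
\end{equation*}
thereby separating the argument into (a) interior regularity of $v$ and (b) quantitative control of $u-v$.

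Next, testing the two equations against $u-v$ and using the ellipticity \eqref{eq:str} together with the standard algebraic identities for the auxiliary map $V(z)=(s^2+|z|^2)^{(p-2)/4}z$, I would derive a comparison inequality of the shape
\begin{equation*}
\int_{B_r}|V(\X u)-V(\X v)|^2\dx \leq c\,r^{2\alpha}\int_{B_r}(|\X u|^2+s^2)^\frac{p}{2}\dx+ c\int_{B_r}|f|\,|u-v|\dx.
\end{equation*}
The first term records the freezing error via the $x$-H\"older assumption in \eqref{eq:str}; the second is handled by the sub-Riemannian Sobolev--Poincar\'e inequality combined with the hypothesis $f\in L^q_\loc(\Om)$ with $q>Q$, producing an excess factor strictly better than $r^Q$.

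The $C^{1,\alpha_0}$ regularity of weak solutions of homogeneous sub-elliptic $p$-Laplacian-type equations on $\hh^n$ from \cite{Zhong,Muk-Zhong} provides, for some universal $\alpha_0\in(0,1)$ and each $\rho\in(0,r/2]$, the Campanato-type decay
\begin{equation*}
\int_{B_\rho}|V(\X v)-(V(\X v))_{B_\rho}|^2\dx\leq c\Bigl(\frac{\rho}{r}\Bigr)^{Q+2\alpha_0}\int_{B_r}|V(\X v)-(V(\X v))_{B_r}|^2\dx.
\end{equation*}
Combining this with the comparison estimate via the triangle inequality yields
\begin{equation*}
\Phi(x_0,\rho)\leq c\Bigl[\Bigl(\frac{\rho}{r}\Bigr)^{Q+2\alpha_0}+\psi(r)\Bigr]\Phi(x_0,r)+\text{data terms},
\end{equation*}
for $\Phi(x_0,r):=\int_{B_r}|V(\X u)-(V(\X u))_{B_r}|^2\dx$ and a remainder $\psi(r)\to 0$ as $r\to 0$. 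A standard Campanato iteration lemma then gives $\Phi(x_0,\rho)\leq c(\rho/r)^{Q+2\gamma'}(\cdots)$ for some $\gamma'>0$, and the sub-Riemannian Campanato--Morrey embedding converts this into the pointwise H\"older estimate \eqref{eq:ciau}. In the $x$-independent case the freezing error drops out, so $\bar R$ and $\gamma$ acquire the simpler dependence stated in the theorem.

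The principal difficulty will be the comparison step: producing a sharp estimate in the $V$-functional on $\hh^n$ in which the H\"older continuity of $a$ in $x$, the degeneracy near $\X u=0$ when $s=0$, and the right-hand side $f\in L^q$ are all balanced so that the extra excess $\psi(r)$ decays strictly as $r\to 0$. Once that balance is struck and the base energy $\int_{B_r}(|\X u|+s)^p\dx$ is controlled via a Caccioppoli inequality for \eqref{eq:maineq}, the excess-decay iteration closes and produces the claimed H\"older exponent $\gamma$ with the stated dependence on the parameters.
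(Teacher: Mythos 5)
The central step of your plan does not go through as stated: the Campanato-type decay you import for the frozen homogeneous solution $v$,
\begin{equation*}
\int_{B_\rho}|V(\X v)-(V(\X v))_{B_\rho}|^2\dx\leq c\Bigl(\frac{\rho}{r}\Bigr)^{Q+2\alpha_0}\int_{B_r}|V(\X v)-(V(\X v))_{B_r}|^2\dx,
\end{equation*}
i.e.\ decay of the excess \emph{by the excess itself}, is not provided by \cite{Zhong,Muk-Zhong} and is precisely the point where the Heisenberg setting differs from the Euclidean one. What is available there is only the weaker estimate \eqref{eq:gradest}, in which the excess at scale $\rho$ is controlled by the full gradient energy at scale $r$; the paper's Proposition \ref{prop:intosc0} shows that even in $L^1$ one can only reach an oscillation decay with an additional term $\chi r^\beta$, $\chi\simeq (s+\sup_{B_{r_0}}|\X u|)/r_0^\beta$, equivalently \eqref{eq:intoscg}, and Remark \ref{rem:noneuc} explains why: the De Giorgi/Caccioppoli estimates of \cite{Zhong,Muk-Zhong} carry commutator terms involving $Tu$, which destroy the invariance (subtracting a constant from the horizontal gradient) that underlies the Euclidean excess-by-excess decay of Lieberman/Duzaar--Mingione. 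With only the weaker decay, your iteration inequality for $\Phi(x_0,\rho)$ does not close: the perturbation term is of size comparable to the full gradient energy, so the standard Campanato lemma yields at best an almost-Lipschitz (Morrey-type) bound on $\X u$, not H\"older continuity of $V(\X u)$.

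The actual proof has to work around exactly this obstruction. It first proves the comparison estimates ($u$ against the solution $w$ of the homogeneous problem with variable coefficients, then $w$ against the frozen solution $v$, with errors given by the density $\bigl(|\mu|(B_{2R})/R^{Q-1}\bigr)^{1/(p-1)}$ and by $(L'R^\alpha)^{2/p}$ times the gradient average), then uses the iteration Lemma \ref{lem:camp} to get the almost-Lipschitz estimate of Proposition \ref{prop:Aml} for $\int_{B_r}(|\X u|+s)\dx$, and only then combines the weak oscillation decay with the comparison errors in Lemma \ref{lem:intosc}, where an unavoidable factor $(R/\varrho)^Q$ appears. The H\"older decay for the excess of $\X u$ is finally extracted by a two-scale balancing, choosing $r=\varrho^{\kappa}$ with $\kappa<1$ close to $1$, which is where the high integrability $q>Q$ is used quantitatively: it gives $w_\mu(r)\leq c\,r^{\delta}\|f\|_{L^q}^{1/(p-1)}$ with $\delta>0$, so the data and freezing terms decay at a definite rate and the interpolation of exponents produces $\gamma>0$. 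Your outline contains the comparison step and correctly anticipates that the balance of the freezing error, the degeneracy and the $L^q$ datum is delicate, but without a substitute for the unavailable excess-by-excess decay (or the two-scale argument above) the claimed conclusion $\Phi(x_0,\rho)\leq c(\rho/r)^{Q+2\gamma'}(\cdots)$ is unjustified, and this is the essential gap to repair.
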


The proof of Theorem \ref{thm:c1au} in this paper, relies on novel techniques introduced 
by Duzaar-Mingione \cite{Duz-Min} based on sharp {\sl comparison estimates} of homogeneous equations with frozen coefficients, in other words {\sl harmonic replacements}. 
However, in the present sub-elliptic setting,
one encounters extra terms coming from commutators of the horizontal vector fields which leads to estimates that are not always as strong as that in the Euclidean setting. An instance appears in 
Proposition \ref{prop:intosc0} for the integral decay estimate, where the extra term in \eqref{eq:intosc0} appears unavoidably and can not be removed unlike similar integral estimates obtained previously 
in the Euclidean setting in \cite{Duz-Min, Lieb--gen}, see Remark \ref{rem:noneuc}. Hence, one gets weaker integral decay estimate of the oscillation of the gradient of solutions of the 
inhomogeneous solution. Nevertheless, a perturbation lemma, Lemma \ref{lem:camp}, similar to the standard lemma of Campanato \cite{Camp, Gia}, leads to the $C^{1,\alpha}$-regularity of weak solutions of equation \eqref{eq:maineq}, exploiting the high integrability of the data. 

We develop necessary notations, definitions and provide previous results on sub-elliptic equations in Section \ref{sec:Prelim}. Then we prove the intermediate estimates in Section \ref{sec:est} and finally, we prove Theorem \ref{thm:c1au} in Section \ref{sec:proofthms}.

\section{Preliminaries and Previous results}\label{sec:Prelim}
\subsection{The Heisenberg Group}\label{subsec:Heisenberg Group}
Here we provide the definition and properties of Heisenberg group  
that would be useful in this paper.  
For more details, we 
refer to \cite{Bonfig-Lanco-Ugu, C-D-S-T}, etc. 
The \textit{Heisenberg Group}, denoted by $\hh^n$ for $n\geq 1$, is identified to  the Euclidean space 
$\R^{2n+1}$ with the group operation 
\begin{equation}\label{eq:group op}
 x\circ y\, := \Big(x_1+y_1,\ \dots,\ x_{2n}+y_{2n},\ t+s+\frac{1}{2}
\sum_{i=1}^n (x_iy_{n+i}-x_{n+i}y_i)\Big)
\end{equation}
for every $x=(x_1,\ldots,x_{2n},t),\, y=(y_1,\ldots,y_{2n},s)\in {\mathbb H}^n$.
Thus, $\hh^n$ with $\circ$ of \eqref{eq:group op} forms a 
non-Abelian Lie group, whose left invariant vector fields corresponding to the
canonical basis of the Lie algebra, are
\[ X_i=  \partial_{x_i}-\frac{x_{n+i}}{2}\partial_t, \quad
X_{n+i}=  \partial_{x_{n+i}}+\frac{x_i}{2}\partial_t,\] 
for every $1\leq i\leq n$ and the only
non zero commutator $ T= \partial_t$. 
We have 
\begin{equation}\label{eq:comm}
  [X_i\,,X_{n+i}]=  T\quad 
  \text{and}\quad [X_i\,,X_{j}] = 0\ \ \forall\ j\neq n+i, 
\end{equation}
and we call $X_1,\ldots, X_{2n}$ as \textit{horizontal
vector fields} and $T$ as the \textit{vertical vector field}. 

Given any scalar function $ f: \hh^n \to \R$, we denote
$\X f  = (X_1f,\ldots, X_{2n}f)$ the \textit{horizontal gradient} 
and $\XX f =  (X_i(X_j f))_{i,j} $
as the \textit{horizontal Hessian}. Also, 
the \textit{sub-Laplacian} operator is denoted by 
$\laph f = \sum_{j=1}^{2n}X_jX_jf $.   
For a vector valued function 
$F = (f_1,\ldots,f_{2n}) : \hh^n\to \R^{2n}$, the 
\textit{horizontal divergence} is defined as 
$$ \dvh (F)  =  \sum_{i=1}^{2n} X_i f_i .$$
%We notify the reader that 
%for notational convenience, we shall sometimes use the vector fields $X_j$ 
%acting on vector valued function component wise, with the notation 
%\begin{equation}\label{eq:notation}
%X_j(F)  :=  (X_jf_1,\ldots,X_jf_{2n}) 
%\end{equation}
%for all $j\in\{1,\ldots, 2n\}$. 
The Euclidean gradient of a scalar 
function $g: \R^{k} \to \R$, shall be denoted by
$\gr g=(D_1g,\ldots,D_{k} g)$ and the Hessian matrix by $D^2g$.

%A piecewise smooth rectifiable curve $\gamma $ is called a \textit{horizontal curve} if its tangent vectors are contained in the \textit{horizontal sub-bundle} 
%$\h = \spn\{X_1,\ldots,X_{2n}\}$ , i.e. 
%$\gamma'(t) \in \h_{\gamma(t)}$ for almost every $t$. 
%For any 
%$x, y \in \hh^n$, if the set of all horizontal curves is denoted as  
%$$\Gamma(x,y) = \set{\gamma:[0,1]\to \hh^n}{\gamma(0) = x,\gamma(1) = y, 
%\ \gamma'(t)\in\h_{\gamma(t)}},$$  
%then Chow's accessibility theorem (see \cite{Chow}) gurantees $\Gamma(x,y) \neq \emp $. 
The \textit{Carnot-Carath\`eodory metric} (CC-metric) is defined 
as the length 
%$\ell(\gamma)$ 
of the shortest 
horizontal curves connecting two points, see \cite{C-D-S-T}, 
and is denoted by $d$.  
%\begin{equation}\label{eq:cc metric}
%d(x,y)= \ \inf\set{\ell(\gamma)}{\gamma \in \Gamma(x,y)}. 
%\end{equation}
This is equivalent to the homogeneous metric, denoted as  
$ \dhh(x,y)= \normh{y^{-1}\circ x}$,  
where the homogeneous norm for $x=(x_1,\ldots,x_{2n}, t)\in \hh^n$ is 
\begin{equation}\label{eq:norm}
 \normh{x} :=  \Big(\, \sum_{i=1}^{2n} x_i^2+ |t|\, \Big)^\frac{1}{2}.
\end{equation}
Throughout this article we use the CC-metric balls $B_r(x) = \set{y\in\hh^n}{d(x,y)<r}$ for $r>0$ and $ x \in \hh^n $. However, by virtue of the equivalence 
of the metrics, all assertions for CC-balls can be restated to any homogeneous metric balls. 

The Haar measure of $\hh^n$ is just the Lebesgue 
measure of $\R^{2n+1}$. For a measurable set $E\subset \hh^n$, we denote 
the Lebesgue measure as $|E|$. For an integrable function $f$, we denote 
$$ (f)_E = \intav_E f\dx = \frac{1}{|E|} \int_E f\dx .$$
The Hausdorff dimension with respect to the metric $d$ is also the homogeneous dimension of the group $\mathbb H^n$, which shall be denoted as 
%\begin{equation}\label{eq:dimh}
%Q := 2n+2
%\end{equation}
$Q=2n+2$, 
throughout this paper. Thus, for any CC-metric ball $B_r$, we have that 
$|B_r| = c(n)r^Q$. 

For $ 1\leq p <\infty$, the \textit{Horizontal Sobolev space} $HW^{1,p}(\Omega)$ consists
of functions $u\in L^p(\Omega)$ such that the distributional horizontal gradient $\X u$ is in $L^p(\Omega\,,\R^{2n})$.
$HW^{1,p}(\Omega)$ is a Banach space with respect to the norm
\begin{equation}\label{eq:sob norm}
  \| u\|_{HW^{1,p}(\Omega)}= \ \| u\|_{L^p(\Omega)}+\| \X u\|_{L^p(\Omega,\R^{2n})}.
\end{equation}
We define $HW^{1,p}_{\loc}(\Omega)$ as its local variant and 
$HW^{1,p}_0(\Omega)$ as the closure of $C^\infty_0(\Omega)$ in 
$HW^{1,p}(\Omega)$ with respect to the norm in \eqref{eq:sob norm}. 
%This is a special case of Sobolev spaces defined on a general 
%metric measure spaces (see \cite{Haj-Kos}), where Sobolev-Poincar\'e inequalities and potential theory have 
%been developed. 
The Sobolev Embedding theorem has the following version in the setting of 
Heisenberg group, see \cite{Jerison,C-D-G,C-D-S-T} etc.
\begin{Thm}[Sobolev Inequality]\label{thm:sob emb}
Given $B_r\subset {\mathbb H}^n$ and $1<q<Q$, there exists $c=c(n,q)>0$ such that, for every $u \in HW^{1,q}_0(B_r)$ 
we have
\begin{equation}\label{eq:sob emb}
\left(\int_{B_r}| u|^{\frac{Q q}{Q-q}}\, dx\right)^{\frac{Q-q}{Q q}}
\leq\, c \left(\int_{B_r}| \X u|^q\, dx\right)^{\frac 1 q}.
\end{equation}
\end{Thm}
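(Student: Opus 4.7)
The plan is to derive the Sobolev inequality from a pointwise representation formula in terms of the fundamental solution of the sub-Laplacian, thereby reducing everything to a Hardy--Littlewood--Sobolev bound for a Riesz-type potential on $\hh^n$.

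First, since $u\in HW^{1,q}_0(B_r)$, I would extend $u$ by zero to all of $\hh^n$; the extended function lies in $HW^{1,q}(\hh^n)$ with horizontal gradient supported in $B_r$. By Folland's theorem the sub-Laplacian $\laph$ admits a fundamental solution $\Gamma$ that is smooth away from the origin and homogeneous of degree $-(Q-2)$ under the Heisenberg dilations. Differentiating then gives the pointwise control $|\X\Gamma(x)|\leq c\,\normh{x}^{-(Q-1)}$.

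Next, for $u\in C^\infty_c(\hh^n)$ I would invoke the representation formula
\[ u(x) \;=\; -\int_{\hh^n} \X\Gamma(y^{-1}\circ x)\cdot \X u(y)\, dy, \]
obtained by integration by parts from $-\laph\Gamma=\delta_0$ (the boundary term vanishes because $\Gamma$ is locally integrable). Density of $C^\infty_c(B_r)$ in $HW^{1,q}_0(B_r)$ extends the formula to the hypothesized class. Combined with the gradient estimate above, this yields the pointwise bound
\[ |u(x)| \;\leq\; c\int_{\hh^n} \frac{|\X u(y)|}{\normh{y^{-1}\circ x}^{Q-1}}\,dy \;=:\; c\, I_1(|\X u|)(x), \]
where $I_1$ is the Riesz-type potential of order one associated to the homogeneous norm.

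It then suffices to prove that $I_1:L^q(\hh^n)\to L^{Qq/(Q-q)}(\hh^n)$ is bounded for $1<q<Q$. The kernel $K(x)=\normh{x}^{-(Q-1)}$ lies in weak $L^{Q/(Q-1)}(\hh^n)$ since the anisotropic dilations give $|\{x\in\hh^n:\normh{x}>\lambda\}|\asymp \lambda^{-Q}$ on sub-level sets of $K$. Splitting the kernel at a level chosen in terms of the Hardy--Littlewood maximal function of $|\X u|$ (Hedberg's trick), combined with the weak-type endpoint and Marcinkiewicz interpolation, yields the desired strong-type bound, which is exactly \eqref{eq:sob emb}. The main obstacle is the pointwise control on $\X\Gamma$: producing the fundamental solution and verifying the homogeneity bound is not elementary and relies on Folland's calculation. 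An alternative route, bypassing $\Gamma$ altogether, would establish a local $(1,q)$-Poincar\'e inequality on CC-balls via Jerison's chaining argument and then invoke the Hajlasz--Koskela machinery to upgrade Poincar\'e plus doubling to Sobolev; I would prefer the fundamental-solution route because it delivers the sharp exponent in one shot.
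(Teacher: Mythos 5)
The paper does not prove this theorem at all: it is quoted as a known result with references to Jerison, Capogna--Danielli--Garofalo and the monograph literature, so there is no internal proof to compare against. Your route --- extend by zero, represent $u$ through the fundamental solution $\Gamma$ of $\laph$, bound $|u|$ by the Riesz-type potential $I_1(|\X u|)$ with kernel $\normh{x}^{-(Q-1)}$, and conclude by weak-type bounds plus Hedberg's maximal-function trick --- is exactly the classical Folland--Stein argument that those references carry out, so in substance you are reproducing the standard proof rather than offering a different one, and the outline is sound: $\Gamma$ exists, is smooth away from the origin and homogeneous of degree $-(Q-2)$ by Folland's theorem (here $Q=2n+2\geq 4>2$), horizontal differentiation lowers homogeneity by one, and fractional integration on a homogeneous group of dimension $Q$ gives the $L^q\to L^{Qq/(Q-q)}$ bound for $1<q<Q$, with a constant depending only on $n,q$ since the whole argument lives on all of $\hh^n$.

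Two points in your sketch deserve tightening. First, the weak-$L^{Q/(Q-1)}$ claim is stated incorrectly: the set $\{\normh{x}>\lambda\}$ has infinite measure; what you need is $|\{x: K(x)>\lambda\}|=|\{x:\normh{x}<\lambda^{-1/(Q-1)}\}|\simeq \lambda^{-Q/(Q-1)}$, which does give $K\in L^{Q/(Q-1),\infty}$. Second, the integration by parts behind your representation formula is a group convolution on a non-abelian group: left-invariant derivatives pass onto the factor $\Gamma(y^{-1}\circ x)$ only in the $x$-slot, and moving a derivative off $u$ produces right-invariant horizontal derivatives of $\Gamma$ (or forces you to convolve on the other side). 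This is harmless --- right-invariant horizontal derivatives of a $(-(Q-2))$-homogeneous kernel are still $O(\normh{x}^{-(Q-1)})$ --- but as written the formula with $\X\Gamma$ is not literally what integration by parts yields, and you should either state it with the correct (right-invariant) derivatives or justify the switch. With those repairs the argument is complete, and your alternative Poincar\'e-plus-chaining route is precisely Jerison's approach cited by the paper.
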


H\"older spaces with respect to homogeneous metrics have been defined in Folland-Stein \cite{Folland-Stein--book} and therefore, are sometimes known as Folland-Stein classes and denoted by $\Gamma^{\alpha}$ or 
$\Gamma^{\,0,\alpha}$ in some literature. However, as in \cite{Zhong, Muk-Zhong}, 
here we continue to maintain the classical notation and define 
\begin{equation}\label{def:holderspace}
C^{\,0,\alpha}(\Om) =
\set{u\in L^\infty(\Om)}{|u(x)-u(y)|\leq c\,d(x,y)^\alpha\ \forall\ x,y\in \Om}
\end{equation} 
for $0<\alpha \leq 1$, 
which are Banach spaces with the norm 
\begin{equation}\label{eq:holder norm}
\|u\|_{C^{\,0,\alpha}(\Om)}
=  \|u\|_{L^\infty(\Om)}+ \sup_{x,y\in\Om} 
\frac{|u(x)-u(y)|}{d(x,y)^\alpha}.
\end{equation}
These have standard extensions to classes $C^{k,\alpha}(\Om)$ 
for $k\in \N$, comprising functions having horizontal 
derivatives up to order $k$ in $C^{\,0,\alpha}(\Om)$; their local 
counterparts are denoted as $C^{k,\alpha}_\loc(\Om)$. The 
Morrey embedding theorem is the following.
\begin{Thm}[Morrey Inequality]\label{thm:mor emb}
Given any $B_r\subset {\mathbb H}^n$ and $q>Q$, there exists $c=c(n,q)>0$ such that, for every $u \in HW^{1,q}_0(B_r)\cap C(\bar B_r)$  we have
\begin{equation}\label{eq:mor emb}
|u(x)-u(y)|
\leq\, c\,d(x,y)^{1-Q/q} \left(\int_{B_r}| \X u|^q\, dx\right)^{\frac 1 q}, 
\quad\forall\ x,y\in B_r.
\end{equation}
\end{Thm}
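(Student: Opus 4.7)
The plan is to prove Theorem \ref{thm:mor emb} by the classical telescoping dyadic argument, adapted to the Heisenberg setting via the $L^1$ Poincar\'e inequality on CC-balls (due to Jerison, cf.\ the references already cited for Theorem \ref{thm:sob emb}), which reads
\[
\intav_{B_\rho(z)} |u - (u)_{B_\rho(z)}|\dx \ \leq\ c(n)\,\rho \intav_{B_\rho(z)}|\X u|\dx
\]
for any $B_\rho(z)\subset \hh^n$ and $u\in HW^{1,1}(B_\rho(z))$. Combined with H\"older's inequality and $|B_\rho|=c(n)\rho^Q$, this gives the \emph{scale-invariant} bound
\[
\intav_{B_\rho(z)} |u - (u)_{B_\rho(z)}|\dx \ \leq\ c\,\rho^{1-Q/q}\Big(\int_{B_\rho(z)}|\X u|^q\dx\Big)^{1/q},
\]
using $\intav |\X u|\leq (\intav|\X u|^q)^{1/q}=c\rho^{-Q/q}\|\X u\|_{L^q(B_\rho(z))}$.

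Next I would show, for every Lebesgue point $z$ of $u$ (hence every $z\in\bar B_r$ by continuity) and every $0<\rho\leq 2r$, the pointwise estimate
\[
 |u(z) - (u)_{B_\rho(z)}| \ \leq\ c\,\rho^{1-Q/q}\Big(\int_{B_\rho(z)}|\X u|^q\dx\Big)^{1/q}.
\]
This follows by a standard telescoping argument: writing $\rho_k=2^{-k}\rho$, one has
\[
u(z) - (u)_{B_\rho(z)} \ =\ \sum_{k=0}^\infty\bigl[(u)_{B_{\rho_{k+1}}(z)} - (u)_{B_{\rho_k}(z)}\bigr],
\]
where each difference is controlled by $2^Q\intav_{B_{\rho_k}(z)}|u-(u)_{B_{\rho_k}(z)}|\dx$, and hence by the scale-invariant estimate above with constant $c\,\rho_k^{1-Q/q}\|\X u\|_{L^q(B_{\rho_k}(z))}$. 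Summing the geometric series in $k$ converges precisely because $q>Q$, so $1-Q/q>0$; the zero extension of $u$ outside $B_r$ (permissible since $u\in HW^{1,q}_0(B_r)$) ensures the balls $B_{\rho_k}(z)$ pose no boundary issues.

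Finally, for $x,y\in B_r$ with $d=d(x,y)$, set $B=B_{2d}(x)$, which contains $B_d(x)$ and (by the triangle inequality) $B_d(y)$. I would split
\[
|u(x)-u(y)| \ \leq\ |u(x)-(u)_B| + |u(y)-(u)_B|,
\]
and estimate each term. The first comes directly from the pointwise estimate with $\rho=2d$. For the second, I use the doubling $|B|\leq 2^Q|B_d(y)|$ to get
\[
|u(y)-(u)_B| \ \leq\ |u(y)-(u)_{B_d(y)}| + 2^Q\intav_B |u-(u)_B|\dx,
\]
applying the pointwise estimate on $B_d(y)$ and the scale-invariant average bound on $B$. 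Combining these and using $|B|^{1/q}\leq c\,d^{Q/q}$ yields the claim with constant $c=c(n,q)$. The main (mild) obstacle is technical rather than conceptual: one must justify the $L^1$ Poincar\'e inequality on CC-balls (which is standard on $\hh^n$) and keep careful track of the doubling factors arising when comparing averages on balls centered at $x$ versus $y$; beyond that, the argument is a verbatim transcription of the Euclidean Morrey proof, with $|x-y|$ replaced by $d(x,y)$ and $n$ replaced by the homogeneous dimension $Q$.
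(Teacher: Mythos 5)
The paper does not prove this statement at all: Theorem \ref{thm:mor emb} is quoted as a known embedding for the Heisenberg group (alongside the Sobolev inequality, with references to Jerison, Capogna--Danielli--Garofalo, Folland--Stein), so there is no in-paper argument to compare yours against. Your proof is correct and is the standard route: the $(1,1)$-Poincar\'e inequality on CC-balls, upgraded by H\"older to the scale-invariant bound $\intav_{B_\rho}|u-(u)_{B_\rho}|\leq c\,\rho^{1-Q/q}\|\X u\|_{L^q(B_\rho)}$, then telescoping over dyadic balls (convergent precisely because $q>Q$), and finally the two-ball comparison at $x$ and $y$ with the doubling factor $2^Q$; the zero extension permitted by $u\in HW^{1,q}_0(B_r)$ correctly disposes of the boundary issue when $B_{2d}(x)$ leaves $B_r$. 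The only technical point worth flagging explicitly is that Jerison's Poincar\'e inequality is sometimes stated with a dilated ball $B_{\lambda\rho}$ on the right-hand side; either invoke the version without dilation (valid on Carnot groups) or note that the dilated form self-improves on doubling spaces, since the telescoping argument goes through with a fixed dilation anyway. With that citation made precise, the argument is complete.
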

%Now, 
%the definition of Morrey and Campanato spaces in sub-elliptic setting 
%differs in different texts. Here, we adopt the definition similar to 
%the classical one.  
%
%For any domain $\Om\subset \hh^n$ and $\lambda>0$, we define the {\it Morrey space} as 
%\begin{equation}\label{def:morrey}
%\mathcal M^{1,\lambda}(\Om) = 
%\set{u\in L^1_\loc(\Om)}{\int_{B_r} |u|\dx <c\,r^\lambda\ \forall\ B_r\subset\Om, r>0}
%\end{equation}
%and the  {\it Campanato space} as 
%\begin{equation}\label{def:camp}
%\mathcal L^{1,\lambda}(\Om) = 
%\set{u\in L^1_\loc(\Om)}{\int_{B_r} \big|u-\{u\}_{B_r}\big|\dx <c\,r^{\lambda}\ \forall\ B_r\subset\Om,r>0},
%\end{equation}
%where in both definitions $B_r$ represents balls with metric $d$. These spaces are Banach spaces and have properties similar to the classical spaces in the Euclidean setting. We shall use the fact that for every $0<\alpha<1$ and 
%$Q=2n+2$, we have  
%\begin{equation}\label{eq:holdcamp}
%\mathcal L^{1,Q+\alpha}(\Om) \subset C^{\,0,\alpha}(\Om),
%\end{equation}
%where the inclusion is to be understood as taking continuous representatives. 
%For details on classical Morrey and Campanato spaces, we refer to 
%\cite{Kuf-O-F} and for the sub-elliptic setting we refer to 
%\cite{C-D-S-T}. 

\subsection{Sub-elliptic equations} 
Here, we enlist some of the properties and results previously known for sub-elliptic equations 
of the form \eqref{eq:maineq}. 

First, we recall that the structure condition \eqref{eq:str} implies the monotonicity and ellipticity inequalities, as follows: 
\begin{align}\label{eq:monotone}
\inp{a(x,z_1)-a(x,z_2)}{z_1-z_2}&\geq c(|z_1|^2+|z_2|^2+s^2)^\frac{p-2}{2}
|z_1-z_2|^2\\
\label{eq:ellip}
\inp{a(x,z)}{z}&\geq c(|z|^2+s^2)^\frac{p-2}{2}|z|^2 
\end{align}
for some $c= c(n,p,L)>0$. This ensures existence and local uniqueness of 
weak solution $u\in HW^{1,p}(\Om)$ of equation \eqref{eq:maineq} from the 
classical theory of monotone operators, see \cite{Kind-Stamp}. We denote  
$u$ as the precise representative, hereafter. 

The regularity and apriori estimates of the homogeneous equation corresponding to \eqref{eq:maineq} with 
freezing of the coefficients, is necessary. Therefore,  
for any $x_0\in\Om$, we consider the equation 
\begin{equation}\label{eq:frzeq}
\dvh a(x_0, \X u) = 0 \quad\text{in}\ \Om. 
\end{equation}
%We recall the following zero-order potential estimate due to Trudinger-Wang \cite[Theorem 5.1]{Tru-Wang1}, see also \cite{Dan--bound}, which is the sub-elliptic analogue of the classical Wolff potential estimate of Kilpel\"ainen-Mal\'{y} \cite{Kilp-Mal1}. 
%\begin{Thm}\label{thm:truwang}
%If $u$ is a weak solution of the equation \eqref{eq:frzeq} with $a(x_0,z)$ satisfying the condition \eqref{eq:str}, then there exist $c=c(n,p,L)>0$ and 
%${\bar R}={\bar R}(n,p,L)>0$ such that 
%the estimate 
%\begin{equation}\label{eq:truwang}
%|u(x_0)|\,\leq\,  c\,|\sup_{\del B_R} u| + c\,\W{1}{p}(x_0,R) 
%\end{equation}
%holds for any $x_0\in\Om$ and $0<R\leq {\bar R}$, 
%whenever $B_{R}(x_0)\subset\Om$. 
%\end{Thm}

The $C^{1,\alpha}$ regulaity of $p$-Laplacian type equations has been dealt with in \cite{Zhong,Muk-Zhong}, where the equation 
$\dvh (D f(\X u))=0$ has been considered.
Given $D_z a(x_0,z)$ being symmetric, all the arguments there also follow in the same way for \eqref{eq:frzeq} with
the growth conditions \eqref{eq:str} which is the same as that in \cite{Muk-Zhong} and slightly weaker than that in \cite{Zhong} (in fact, \eqref{eq:frzeq} has been considered in \cite{Muk0} in a more general setting).  
The following regularity theorem is due to \cite[Theorem 1.1]{Zhong} and \cite[Theorem 1.3]{Muk-Zhong}. 

\begin{Thm}\label{thm:c1alpha}
If $u\in HW^{1,p}(\Om)$ is a weak solution of the equation \eqref{eq:frzeq} with $a(x_0,z)$ satisfying the condition \eqref{eq:str} and $D_z a(x_0,z)$ is a symmetric matrix, then $\X u$ is locally H\"older continuous. Moreover, there exist constants $c=c(n,p,L)>0$ and $\beta=\beta(n,p,L)\in (0,1)$ 
such that the following holds, 
\begin{align}
\label{eq:lipest}
(i)\ \ &\sup_{B_{R/2}}\ |\X u|^p\leq 
%\frac{c}{(1-\tau)^{Q/p}}
%c(1-\tau)^{-Q}
c\intav_{B_R}(|\X u|^2+s^2)^\frac{p}{2}\dx;\\
\label{eq:gradest}
(ii)\ \ &\intav_{B_\varrho}|\X u - (\X u)_{B_\varrho}|^p\dx \leq 
c\big(\varrho/R\big)^\beta 
\intav_{B_{R}}(|\X u|^2+s^2)^\frac{p}{2}\dx,
\end{align}
for every concentric $B_\varrho\subseteq B_R\subset\Om$ and $1<p<\infty$. 
\end{Thm}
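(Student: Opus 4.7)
The plan is to reduce Theorem \ref{thm:c1alpha} to the results of \cite{Zhong, Muk-Zhong} by verifying that the symmetry of $D_z a(x_0, z)$ allows the techniques developed there (for equations of the form $\dvh(Df(\X u))=0$) to transfer without change to the frozen equation \eqref{eq:frzeq}. In the variational case, the symmetry of $D^2 f$ is used throughout in order to integrate by parts and to commute horizontal derivatives with the coefficient matrix; if $D_z a(x_0,z)$ is only assumed symmetric with the $p$-growth \eqref{eq:str}, the same computations still go through verbatim, so nothing in Zhong's argument uses the variational structure beyond symmetry and ellipticity. I would therefore proceed by retracing the main steps of \cite{Zhong, Muk-Zhong} with $D_z a(x_0,z)$ playing the role of $D^2 f$.

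For the local Lipschitz bound \eqref{eq:lipest}, the first step is to formally differentiate \eqref{eq:frzeq} along a horizontal direction $X_k$, obtaining a linearized equation for $X_k u$ with measurable coefficients $D_z a(x_0,\X u)$. Because $[X_i, X_{n+i}] = T$, this differentiation produces commutator terms involving the vertical derivative $Tu$, which are absent in the Euclidean setting. The key technical input, due to Zhong, is a Caccioppoli-type inequality for $Tu$ in terms of $\X u$: one tests \eqref{eq:frzeq} with a cutoff multiplied by a suitable power of $Tu$, integrates by parts, and absorbs the commutator contributions carefully. Combining this with a Caccioppoli estimate for $\X u$ (obtained by testing the differentiated equation with $\eta^2 (s^2+|\X u|^2)^\sigma X_k u$) yields a reverse-H\"older-type inequality for $(s^2+|\X u|^2)^{p/2}$, which is iterated in Moser fashion to produce the sup-bound \eqref{eq:lipest}.

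For the Campanato-type decay \eqref{eq:gradest}, once $|\X u|$ is known to be locally bounded, the linearized equation satisfied by each $X_k u$ is uniformly elliptic on the degenerate set $\{s^2+|\X u|^2 \geq \delta\}$, so a De Giorgi oscillation lemma, adapted to the sub-elliptic setting via the Poincar\'e inequality and the Moser scheme already in place, yields H\"older continuity of $\X u$ with some exponent $\beta=\beta(n,p,L)\in(0,1)$. The classical Campanato characterization of H\"older spaces then upgrades the pointwise oscillation estimate to the integral mean-oscillation estimate \eqref{eq:gradest}; the Lipschitz bound \eqref{eq:lipest} is used to replace the $L^\infty$ norm of $\X u$ by its $L^p$ average on the larger ball $B_R$.

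The main obstacle is the control of $Tu$: in every step in which one differentiates the equation horizontally, commutators force the appearance of the vertical derivative, and absorbing these terms back into horizontal quantities is the technical heart of \cite{Zhong, Muk-Zhong} and the key distinction from the Euclidean case. The absorption relies delicately on the interplay between the test-function exponents, the $p$-Laplacian growth \eqref{eq:str}, and the symmetry of $D_z a(x_0,z)$. Once this absorption mechanism is in place for the frozen equation, everything else (Moser iteration for (i), De Giorgi oscillation plus Campanato embedding for (ii)) is standard.
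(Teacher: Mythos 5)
Your proposal takes the same route as the paper, which does not reprove Theorem \ref{thm:c1alpha} but attributes it to \cite[Theorem 1.1]{Zhong} and \cite[Theorem 1.3]{Muk-Zhong}, observing only that the symmetry of $D_z a(x_0,z)$ together with the growth condition \eqref{eq:str} lets the variational arguments there carry over verbatim to the frozen equation \eqref{eq:frzeq}. Your sketch of the internal mechanism (Caccioppoli estimates for $Tu$, Moser iteration for the Lipschitz bound, De Giorgi oscillation plus the Campanato characterization for the decay estimate) is an accurate summary of those cited proofs, so the proposal is correct and essentially identical in approach.
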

%
%In addition, the sub-elliptic reverse H\"older inequality, see \cite{ZGold, Min-Z-Zhong}, and Gehring's lemma, implies that there exists 
%$\chi_0=\chi_0(n,p,L)>1$ such that we have 
%\begin{equation}\label{eq:subgehring}
%\Big(\intav_{B_{r/2}}(|\X u| +s)^{\chi_0 p}\dx\Big)^\frac{1}{\chi_0}
%\leq c\intav_{B_{r}}(|\X u| +s)^p\dx
%\end{equation}
%which, together with \eqref{eq:lipest}, yields
In fact, similarly as the Euclidean case, the following local estimate can be shown by using Sobolev's inequality and
Moser's iteration on the Caccioppoli type inequalities of \cite{Zhong}, for any $0<\sigma<1$ and $q>0$, 
\begin{equation}\label{eq:lipq}
\sup_{B_{\sigma R}}\ |\X u| \leq c (1-\sigma)^{-\frac{Q}{q}} \bigg(\intav_{B_R}(|\X u|^2+s^2)^\frac{q}{2}\dx\bigg)^\frac{1}{q}
\end{equation}
for some $c=c(n,p,L,q)>0$, see \cite[p. 12]{Zhong}. Thus, taking $q=1$, we can have 
\begin{equation}\label{eq:lip1}
\sup_{B_{R/2}}\ |\X u| \leq c\intav_{B_{R}}(|\X u| +s)\dx.
\end{equation}
From \eqref{eq:lip1} it ie easy to see that for all $0<r\leq R/2 $, we have 
\begin{equation}\label{eq:unlip}
\int_{B_r} |\X u|\dx\, \leq\, c\left(\frac{r}{R}\right)^Q\int_{B_{R}}(|\X u| +s)\dx,
\end{equation}
where $u\in C^{1,\beta}(\Om)$ is a solution of the equation \eqref{eq:frzeq} in the above inequalities. 

We recall the notion of De Giorgi's class of functions in this setting, which would be required for Proposition \ref{prop:intosc0}, in Section \ref{sec:est}. 
Given a metric ball $B_{\rho_0}\subset {\mathbb H}^n$, the De Giorgi's
class $DG^+(B_{\rho_0})$ consists of functions $v\in
HW^{1,2}(B_{\rho_0})\cap L^\infty(B_{\rho_0})$, which satisfy the inequality 
\begin{equation}\label{eq:DG}
\int_{B_{\rho'}}|\X (v-k)^+|^2\, dx\le
\frac{\gamma}{(\rho-\rho')^2}
\int_{B_\rho}|(v-k)^+|^2\, dx+\chi^2|
A_{k,\rho}^+|^{1-\frac{2}{Q}+\epsilon}
\end{equation}
for some $\gamma, \chi,\epsilon>0$, 
where $ A_{k,\rho}^+=\{ x\in B_\rho: (v-k)^+=\max (v-k,0)>0\}$ for any arbitrary $k\in\R$, the balls $B_{\rho'},B_\rho$ and 
$B_{\rho_0}$ are concentric with $0<\rho'<\rho\le \rho_0$. 
The class $DG^-(B_{\rho_0})$ is similarly defined and $DG(B_{\rho_0})=
DG^+(B_{\rho_0})\cap DG^-(B_{\rho_0})$. All properties of classical 
De Giorgi class functions, also hold for these classes. 

We end this section by introducing the sub-elliptic Wolff potential given by 
\begin{equation}\label{eq:Wolfdef}
\W{\beta}{p}(x_0,R) := \int^{R}_0 \Big( \frac{|\mu|(B_\varrho(x_0))}{\varrho^{Q-\beta p}}\Big)^\frac{1}{p-1}\frac{d\varrho}{\varrho}\qquad \forall\ \beta\in (0,Q/p],
\end{equation}
and 
recalling following lemma
of the density of Wolff potential, see \cite{Duz-Min} for proof.
\begin{Lem}\label{lem:denswolf}
Given any $H>1, x_0\in\Om$ and $r>0$, if $r_i= r/H^i$ for every $i\in \{0,1,2,\ldots\}$, then we have 
\begin{equation}\label{eq:denswolf}
\sum_{i=0}^{\infty} \bigg(\frac{|\mu|(B_{r_i}(x_0))}{r_{i}^{Q-1}}\bigg)^\frac{1}{p-1} \leq  \bigg(\frac{2^\frac{Q-1}{p-1}}{\log(2)}+\frac{H^\frac{Q-1}{p-1}}{\log(H)}\bigg)\W{\frac{1}{p}}{p}(x_0,2r).
\end{equation}
\end{Lem}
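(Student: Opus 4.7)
The plan is to bound each term in the left-hand side sum individually by an integral over a suitable dyadic-type annulus of radii, using only the monotonicity of $\varrho\mapsto|\mu|(B_\varrho(x_0))$ in the radius, and then to recognize the sum of these integrals as essentially $\boldsymbol{W}^\mu_{1/p,p}(x_0, 2r)$. Note that with $\beta=1/p$ we have $Q-\beta p = Q-1$, so
\[
\W{1/p}{p}(x_0, 2r) \;=\; \int_0^{2r}\Big(\frac{|\mu|(B_\varrho(x_0))}{\varrho^{Q-1}}\Big)^{\frac{1}{p-1}}\frac{d\varrho}{\varrho}.
\]

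For each fixed index $i\geq 1$, I would use the interval $[r_i, r_{i-1}] = [r_i, Hr_i]$. On this interval, $B_\varrho(x_0)\supseteq B_{r_i}(x_0)$ gives $|\mu|(B_\varrho(x_0))\geq |\mu|(B_{r_i}(x_0))$, while $\varrho^{Q-1}\leq (Hr_i)^{Q-1}$, hence
\[
\Big(\frac{|\mu|(B_\varrho(x_0))}{\varrho^{Q-1}}\Big)^{\frac{1}{p-1}} \;\geq\; \frac{1}{H^{(Q-1)/(p-1)}}\Big(\frac{|\mu|(B_{r_i}(x_0))}{r_i^{Q-1}}\Big)^{\frac{1}{p-1}}.
\]
Integrating $d\varrho/\varrho$ over $[r_i,Hr_i]$ produces a factor $\log H$, and rearranging gives
\[
\Big(\frac{|\mu|(B_{r_i}(x_0))}{r_i^{Q-1}}\Big)^{\frac{1}{p-1}} \;\leq\; \frac{H^{(Q-1)/(p-1)}}{\log H}\int_{r_i}^{r_{i-1}}\Big(\frac{|\mu|(B_\varrho(x_0))}{\varrho^{Q-1}}\Big)^{\frac{1}{p-1}}\frac{d\varrho}{\varrho}.
\]

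The $i=0$ term, however, cannot be handled by this scheme because it would require extending the Wolff potential beyond radius $r$ by a factor of $H$, whereas the conclusion only permits $2r$. I would handle it separately by integrating over $[r,2r]$: the same monotonicity argument with the trivial upper bound $\varrho^{Q-1}\leq (2r)^{Q-1}$ on $[r,2r]$ yields
\[
\Big(\frac{|\mu|(B_{r}(x_0))}{r^{Q-1}}\Big)^{\frac{1}{p-1}} \;\leq\; \frac{2^{(Q-1)/(p-1)}}{\log 2}\int_{r}^{2r}\Big(\frac{|\mu|(B_\varrho(x_0))}{\varrho^{Q-1}}\Big)^{\frac{1}{p-1}}\frac{d\varrho}{\varrho}.
\]

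Finally, summing: since the intervals $[r_i, r_{i-1}]$ for $i\geq 1$ form a disjoint decomposition of $(0,r]$ and $[r,2r]$ covers the remaining piece, the combined integrals reassemble into $\W{1/p}{p}(x_0,2r)$, yielding the claimed constant $\bigl(2^{(Q-1)/(p-1)}/\log 2 + H^{(Q-1)/(p-1)}/\log H\bigr)$. I do not anticipate any genuine obstacle here; the only subtlety is the bookkeeping that isolates $i=0$ to avoid inflating the Wolff potential's upper endpoint from $2r$ to $Hr$, which is precisely what produces the two distinct summands in the final constant.
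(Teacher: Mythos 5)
Your proof is correct and is essentially the standard argument for this lemma (the paper itself gives no proof, deferring to Duzaar--Mingione, where exactly this scheme is used): bound each summand with $i\geq 1$ by the integral of the Wolff density over $[r_i,Hr_i]$ using monotonicity of $\varrho\mapsto|\mu|(B_\varrho(x_0))$, and treat $i=0$ separately on $[r,2r]$ so the upper endpoint stays at $2r$. The final bookkeeping is fine: each of the two resulting integrals is bounded by $\W{\frac{1}{p}}{p}(x_0,2r)$, so the two constants add up to precisely the stated one.
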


%Similarly as the classical case, the sub-elliptic Wolff potential is defined as 
%\begin{equation}\label{eq:Wolff}
%\W{\beta}{p}(x_0,R) := \int^{\bar R} \Big( \frac{|\mu|(B_\varrho(x_0))}{\varrho^{Q-\beta p}}\Big)^\frac{1}{p-1}\frac{d\varrho}{\varrho}\qquad \forall\ \beta\in (0,Q/p].
%\end{equation}

\section{Estimates of the horizontal gradient}\label{sec:est}
In this section, we show several comparison estimates along the lines of 
\cite{Kilp-Mal1, Duz-Min} ultimately leading to a pointwise estimate of the horizontal gradient. Here onwards we fix $x_0\in\Om$ and 
denote $B_\varrho = B_\varrho(x_0) $ for every $\varrho>0$. Also, we 
denote all constants as $c$, the values of which may vary from line to line but they are positive and dependent only on $n,p,L$, unless explicitly specified otherwise.

In the following, first we show the integral oscillation decay estimate of solutions of the equation \eqref{eq:frzeq}, analogous to that of the Euclidean setting in \cite{Lieb--gen, Duz-Min} etc. 
\begin{Prop}\label{prop:intosc0}
Let $B_{r_0}\subset \Om$ and $u\in C^{1,\beta}(\Om)$ be a solution of equation
\eqref{eq:frzeq}, with $\beta=\beta(n,p,L)\in (0,1)$. Then there exists 
$c=c(n,p,L)>0$, 
such that for all $0<\varrho<r<r_0$, we have 
\begin{equation}\label{eq:intosc0}
\intav_{B_\varrho}|\X u - (\X u)_{B_\varrho}|\dx \leq c\left(\frac{\varrho}{r}\right)^\beta
\Big[\intav_{B_r}|\X u - (\X u)_{B_r}|\dx + \chi r^\beta\Big]
\end{equation}
with $\chi= (s+M(r_0))/r_0^\beta$, where $M(r_0)=\max_{1\le i\le 2n}\sup_{B_{r_0}} |X_i u|$.
\end{Prop}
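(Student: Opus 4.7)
The plan is to reduce \eqref{eq:intosc0} to the classical oscillation decay for De Giorgi class functions, applied componentwise to the horizontal derivatives of $u$. Fix $k\in\{1,\ldots,2n\}$ and set $w:=X_ku$. First, I would differentiate \eqref{eq:frzeq} along $X_k$; by the symmetry of $D_za(x_0,\cdot)$ this yields a linear equation of the schematic form $\dvh(A(x)\X w) = \dvh F$, where $A_{ij}(x) := \partial_{z_j}a_i(x_0,\X u(x))$ inherits the weighted ellipticity of \eqref{eq:str} with weight $\weight$, and $F$ arises from the non-vanishing commutators $[X_k,X_j]u = \pm Tu$. This non-commutation is the sub-elliptic departure from the Euclidean situation.

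Next, for every $\lambda\in\R$, testing this linearized equation against $\eta^2(w-\lambda)^\pm$ with a standard cut-off $\eta$ and absorbing the weight using the Lipschitz bound \eqref{eq:lip1} would produce a Caccioppoli inequality for $(w-\lambda)^\pm$. The commutator/$F$-contribution is the new ingredient relative to the Euclidean case: combining $L^2$-estimates for $Tu$ (available through the Caccioppoli inequality for $Tu$ developed in \cite{Zhong, Muk-Zhong}) with \eqref{eq:lip1} bounds it by $(s+M(r_0))^2\,|A^\pm_{\lambda,\rho}|^{1-2/Q+\eps}$ for some $\eps>0$, matching the defect term in \eqref{eq:DG}. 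Thus $w-\lambda\in DG(B_{r_0})$ with De Giorgi constant $\chi=(s+M(r_0))/r_0^\beta$, uniformly in $\lambda$.

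From there the standard theory takes over. De Giorgi iteration using Theorem \ref{thm:sob emb} produces oscillation decay for DG-class functions in the form
\[
\osc_{B_\varrho}w \leq c(\varrho/r)^\beta\bigl[\osc_{B_r}w + \chi r^\beta\bigr],\qquad 0<\varrho<r<r_0,
\]
and a Moser iteration on the same Caccioppoli inequalities at height $\lambda=(w)_{B_r}$ gives $\osc_{B_{r/2}}w \leq c\intav_{B_r}|w-(w)_{B_r}|\dx + c\chi r^\beta$. Chaining these and using $\intav_{B_\varrho}|w-(w)_{B_\varrho}|\leq \osc_{B_\varrho}w$ yields \eqref{eq:intosc0} componentwise; summing over $k$ completes the proof.

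The main obstacle is the second step: honest control of the commutator $F$ in the Caccioppoli inequality using only $s$ and the Lipschitz bound $M(r_0)$, rather than any stronger a priori norm. This defect is precisely why one cannot obtain the cleaner Euclidean-style oscillation decay of \cite{Duz-Min, Lieb--gen}, and where the specific form of $\chi$ emerges (cf.\ Remark \ref{rem:noneuc}). Once the DG-class inclusion with the correct $\chi$ is in hand, the rest is a direct adaptation of standard De Giorgi--Campanato machinery to the sub-elliptic setting.
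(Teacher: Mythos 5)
Your overall architecture --- a pointwise oscillation decay for the components $X_iu$, then a bound of the oscillation at a smaller scale by the $L^1$ mean oscillation plus $\chi r^\beta$, then chaining the two --- is the same as the paper's. The gap is in the pivotal middle step: you assert that $X_ku-\lambda$ lies in the \emph{unweighted} De Giorgi class \eqref{eq:DG} on $B_{r_0}$, with defect $\chi$, uniformly in the level $\lambda$, merely by testing the differentiated equation and controlling the commutator contribution via the $L^2$ theory for $Tu$ and the Lipschitz bound \eqref{eq:lip1}. For $p>2$ this is not available: the Caccioppoli inequality coming from the linearized equation carries the degenerate weight $(|\X u|^2+s^2)^{\frac{p-2}{2}}$ on its left-hand side, and to arrive at an inequality of the form \eqref{eq:DG} one must bound this weight from \emph{below} on the relevant level sets, i.e.\ one needs quantitative nondegeneracy of $|\X u|$ on a large portion of the ball. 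This difficulty is not a sub-elliptic artifact (it is already present in the Euclidean theory of \cite{Dib,Tolk}); in \cite{Zhong,Muk-Zhong}, and in the paper's proof, the inclusion $X_iu\in DG(B_{2r'})$ is established only under the a priori assumption \eqref{assM} together with the measure-density hypothesis of Case 1 with a small $\theta(n,p,L)$ --- it is not an unconditional consequence of the equation.

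Consequently your chain ``DG at all levels $\Rightarrow\ \osc_{B_{r/2}}w\le c\intav_{B_r}|w-(w)_{B_r}|\dx+c\chi r^\beta$'' breaks exactly in the complementary regime, where $|\X u|$ need not be comparable to $M(4r')$ on a large set. The paper handles that regime by a separate elementary argument (its Case 2): failure of both density conditions forces $\omega(4r')\ge 3M(4r')/4$, and a dichotomy on $L=\max_i|(X_iu)_{B_r}|$ versus $2\omega(4r')$ either yields $I(r)\gtrsim\omega(4r')$ directly, or allows $\omega(4\delta r)$ to be absorbed using \eqref{oscom} and \eqref{eq:lip1} after shrinking $\delta$. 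Without this case analysis (or some substitute dealing with the degeneracy of the weight), your proof of the key claim \eqref{claimdel} is incomplete. Note also that the defect $\chi$ is not produced solely by the commutator term, as your last paragraph suggests: it enters equally through discarding the nondegeneracy assumption \eqref{assM} (when \eqref{assM} fails, \eqref{claimdel} holds trivially with exponent $\sigma$), which is precisely what licenses the case analysis.
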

\begin{proof}
Given $B_{r_0}\subset \Om$, let us denote $M(\rho)=\max_{1\le i\le 2n}\sup_{B_{\rho}} |X_i u|$ and  
\begin{equation}\label{eq:Mom}
\omega(\rho)=\max_{1\le i\le 2n}\osc_{B_{\rho}} X_i u \quad\text{and}\quad 
I(\rho) = \intav_{B_\rho}|\X u - (\X u)_{B_\rho}|\dx
\end{equation}
for every $0<\rho<r_0$. 
Hence, note that $\omega(\rho)\leq 2M(\rho)$. Now, we recall 
the oscillation lemma proved in \cite[Theorem 4.1]{Muk-Zhong}, that 
there exists $m=m(n,p,L)\ge 0$ such that
for every $0<r\leq r_0/16$, we have  
\begin{equation}\label{mur4r'}
\omega(r) \leq (1-2^{-m})\omega(8r) + 2^m (s+M({r_0}))\Big(\frac{r}{{r_0}}\Big)^\beta,
\end{equation}
for some $\beta=\beta(n,p,L)\in (0,1/p)$. A standard iteration on \eqref{mur4r'}, 
see for instance \cite[Lemma 7.3]{Giu}, implies that for every $0<\varrho<r\leq r_0$, 
we have 
\begin{equation}\label{oscom}
\begin{aligned}
\omega(\varrho)\leq c\Big[\left(\frac{\varrho}{r}\right)^\beta
\omega(r)+\chi \varrho^\beta\Big]=c\left(\frac{\varrho}{r}\right)^\beta
\big[\omega(r)+\chi r^\beta\big]
\end{aligned}
\end{equation}
where $\chi= (s+M(r_0))/r_0^\beta$ and $c=c(n,p,L)>0$.  
If $\varrho \leq \delta r$ for some $\delta\in (0,1)$, 
it is easy to see from \eqref{oscom}, that for some $c=c(n,p,L)>0$, we have 
\begin{equation}\label{I<om}
I(\varrho) \leq c\,\omega(\varrho) \leq 
c\,\delta^{-\beta}\left(\frac{\varrho}{r}\right)^\beta[\omega(\delta r)+\chi r^\beta].
\end{equation}
Now we claim that, 
there exists $\delta=\delta(n,p,L)\in (0,1)$ such that, the inequality  
\begin{equation}\label{claimdel}
\omega(\delta r) \leq c [I(r) +\chi r^\beta]
\end{equation}
holds for some $c=c(n,p,L)>0$. Then \eqref{I<om} and \eqref{claimdel} together, 
yields \eqref{eq:intosc0}; hence proving the claim 
\eqref{claimdel} is enough to complete the proof. 

To this end, let us denote $r'=\delta r$, where $\delta\in(0,1)$ is to be chosen later. 
Notice that, to prove the claim \eqref{claimdel},  
we can make the apriori assumption:
\begin{equation}\label{assM}
\omega(r)\geq (s+M(r_0)) (r/r_0)^\sigma, 
\end{equation}
with $\sigma=1/p$ for $p\geq 2$, 
since, otherwise \eqref{claimdel} holds trivially with $\beta=\sigma$. Now, we consider the following complementary cases. This is very standard for elliptic estimates, see \cite{Dib, Tolk, Lieb--gen, Duz-Min} for corresponding Euclidean cases.

\textbf{Case 1}: For at least one index $l\in\{1,\ldots,2n\}$, we have either
\begin{equation*}
  \Big|B_{4r'}\cap\Big\{X_lu<\frac{M(4r')}{4}\Big\}\Big|
 \leq \theta |B_{4r'}| \ \ \text{or}\ \ \Big|B_{4r'}\cap\Big\{ X_lu>-\frac{M(4r')}{4}\Big\}\Big|
  \leq \theta |B_{4r'}|. 
 \end{equation*}
It has been shown in \cite[Theorem 4.1]{Muk-Zhong} that under assumption 
\eqref{assM}, if Case 1 holds with choice of a small enough $\theta = \theta(n,p,L)>0$,  
then $X_iu\in DG(B_{2r'})$ for every $i\in \{1,\ldots,2n\}$. 
Then, the standard local boundedness estimates of De Giorgi class functions 
\cite[Theorem 7.2 and 7.3]{Giu} follow; 
the fact that $X_iu$ belongs to $DG^+(B_{2r'})$ and $DG^-(B_{2r'})$,  
yields the following respective estimates for any $\vartheta < M(r')$:
%\begin{equation*}
%\sup_{B_{r'}}(X_iu-\vartheta) \leq c\Big[\intav_{B_{2r'}}(X_iu-\vartheta)^+\dx +\chi r^\beta\Big],\ 
%\sup_{B_{r'}}(\vartheta-X_iu) \leq c\Big[\intav_{B_{2r'}}(\vartheta-X_iu)^+\dx +\chi r^\beta\Big],
%\end{equation*}
\begin{align}
\label{supdg}
&\sup_{B_{r'}}(X_iu-\vartheta) \leq c\Big[\intav_{B_{2r'}}(X_iu-\vartheta)^+\dx +\chi r'^\beta\Big], \\
\label{infdg}
&\sup_{B_{r'}}(\vartheta-X_iu) \leq c\Big[\intav_{B_{2r'}}(\vartheta-X_iu)^+\dx +\chi r'^\beta\Big],
\end{align}
for every $i\in \{1,\ldots,2n\}$. Adding \eqref{supdg} and \eqref{infdg} with 
$\vartheta = (X_iu)_{B_{r'}}$, we get 
$$ \osc_{B_{r'}}X_iu\leq c\Big[\intav_{B_{2r'}}|X_iu-(X_iu)_{B_{r'}}|\dx +\chi r'^\beta\Big]\leq c[I(r)+\chi r^\beta]$$
for some $c=c(n,p,L)>0$ and $\delta<1/2$, 
which further implies \eqref{claimdel} for this case. 

\textbf{Case 2}: With $\theta=\theta(n,p,L)>0$ as in Case 1, for every $i\in\{1,\ldots,2n\}$, we have 
\begin{equation*}
  \Big|B_{4r'}\cap\Big\{X_lu<\frac{M(4r')}{4}\Big\}\Big|
 > \theta |B_{4r'}| \ \ \text{and}\ \ \Big|B_{4r'}\cap\Big\{ X_lu>-\frac{M(4r')}{4}\Big\}\Big|
  > \theta |B_{4r'}|. 
 \end{equation*}
First, we notice that the above assertions respectively imply $\inf_{B_{4r'}}X_iu\leq M(4r')/4$ 
and $\sup_{B_{4r'}}X_iu\geq -M(4r')/4$ for every 
$i\in\{1,\ldots,2n\}$. These further imply that
\begin{equation}\label{ocsup}
\omega(4r')\geq M(4r')-M(4r')/4=3M(4r')/4.
\end{equation}
Now, let us denote $L=\max_{1\leq i\leq 2n}|(X_iu)_{B_r}| = |(X_ku)_{B_r}|$ for some 
$k\in\{1,\ldots,2n\}$. Then note that, if $L>2\omega(4r')$ then using \eqref{ocsup},  we have
$$|(X_ku)_{B_r}|-|X_ku|\geq 2\omega(4r')-M(4r') \geq M(4r')/2 \quad\text{in} \  B_{4r'},$$
which, together with the choice of $\delta<1/4$, further implies
\begin{equation}\label{c21}
I(r)\geq c(n)\intav_{B_{4r'}}|X_k u-(X_ku)_{B_r}|\dx \geq 
\frac{c(n)}{2}M(4r')\geq \frac{c(n)}{4}\omega(4r'). 
\end{equation}
If $L\leq 2\omega(4r')=2\omega(4\delta r)$ then, 
we choose $\delta<1/8$ so that   
using $\omega(r/2)\leq 2M(r/2)$ and \eqref{eq:lip1} i.e.  
$M(r/2)\leq c\intav_{B_r}|\X u|\dx$ respectively on \eqref{oscom}, we obtain
\begin{equation}\label{llest1}
\begin{aligned}
\omega(4\delta r)&\leq c(8\delta)^\beta [\omega(r/2)+\chi r^\beta]\leq 
c\delta^\beta\Big[\intav_{B_r}|\X u|\dx+\chi r^\beta\Big] \\
&\leq c_1\delta^\beta[ I(r)+ L+\chi r^\beta]
\leq c_1\delta^\beta[ I(r)+ 2\omega(4\delta r)+\chi r^\beta]
\end{aligned}
\end{equation}
for some $c_1=c_1(n,p,L)>0$, 
where the second last inequality of the above is a consequence of triangle inequality and the 
definition of $I$ and $L$. Now we make a further reduction of $\delta$, such that 
$2c_1\delta^\beta<1$, so that \eqref{llest1} imply
\begin{equation}\label{c22}
\omega(4\delta r)\leq \frac{c_1\delta^\beta}{1-2c_1\delta^\beta}
\big[I(r)+\chi r^\beta\big].
\end{equation}
Thus \eqref{c21} and \eqref{c22} together shows that \eqref{claimdel} holds for Case 2, as well. Therefore, we have shown that claim \eqref{claimdel} holds for 
both cases and the proof is finished.
\end{proof}

\begin{Rem}\label{rem:noneuc}
For the Euclidean case, say $\dv(|\gr u|^{p-2}\gr u)=0$, it is well known, see \cite{Lieb--gen, Duz-Min}, that for any $0<\varrho<r$, the following estimate holds:
\begin{equation}\label{eq:eucstrong}
 \intav_{B_\varrho}|\gr u - (\gr u)_{B_\varrho}|\dx \leq c\left(\frac{\varrho}{r}\right)^\beta\intav_{B_r}|\gr u - (\gr u)_{B_r}|\dx .
\end{equation}
The purpose of the Proposition \ref{prop:intosc0} is to show that the sub-elliptic setting is very different even for the homogeneous equation and the integral oscillation estimate is not as strong as the above. We have the extra term $\chi\neq 0$ in \eqref{eq:intosc0} which one can not get rid of from estimates in \cite{Zhong,Muk-Zhong}. Its source goes back to the extra terms containing the commutator 
$Tu = [X_iu,X_{n+i}u]$
in the De Giorgi type estimates of \cite{Zhong,Muk-Zhong}, where $Tu$ is locally 
majorized by $\X u$ from an integrability estimate in   
\cite{Zhong}.
\end{Rem}

Thus, if $u\in C^{1,\beta}(\Om)$ is a solution of the equation \eqref{eq:frzeq}, the integral decay estimate of the oscillation we end up with from \eqref{eq:lip1} and \eqref{eq:intosc0}, is 
\begin{equation}\label{eq:intoscg}
\intav_{B_\varrho}|\X u - (\X u)_{B_\varrho}|\dx \leq c\left(\frac{\varrho}{r}\right)^\beta
\intav_{B_r}(|\X u|+s)\dx,
\end{equation}
for any $0<\varrho<r$, which is not as strong as the \eqref{eq:eucstrong}. Nevertheless, it is good enough for proving 
Theorem \ref{thm:c1au} via a perturbation argument.

\subsection{Comparison estimates} In this subsection, we prove comparison estimates essential for the proof of our theorems, by
localizing the equations \eqref{eq:maineq} and \eqref{eq:frzeq}. They follow similarly, mutatis mutandis, of the Euclidean case in \cite{Duz-Min}.
Here onwards, we denote $u\in HW^{1,p}(\Om)$ as a weak solution of \eqref{eq:maineq} and $p\geq 2$.

Fix $R>0$ such that $B_{2R}\subset \Om$ and consider the Dirichlet problem 
\begin{equation}\label{eq:homdir}
 \begin{cases}
  \dvh a(x,\X w)=  0\ \ \text{in}\ B_{2R};\\
 \ w-u\in HW^{1,p}_0(B_{2R}). 
 \end{cases}
\end{equation}
The following is the first comparison lemma where the density of the 
Wolff potential \eqref{eq:Wolfdef} appears in the estimates. The proof is similar to that of \cite{Duz-Min}, see also \cite{Boc-Gal}.
\begin{Lem}\label{lem:homcomp}
Let $u\in HW^{1,p}(\Om)$ be a weak solution of equation \eqref{eq:maineq} and $p\geq 2$. Then, the weak solution 
$w\in HW^{1,p}(B_{2R})$ of the equation \eqref{eq:homdir} satisfy 
\begin{equation}\label{eq:homdirest}
\intav_{B_{2R}}|\X w -\X u|\dx \leq c\bigg(\frac{|\mu|(B_{2R})}{R^{Q-1}}\bigg)^\frac{1}{p-1}, 
\end{equation}
for some $c=c(n,p,L)>0$. 
%\begin{align}\label{eq:homdir1}
%&(i)\qquad \intav_{B_{2R}}|w -u|\dx \leq c\bigg(\frac{|\mu|(B_{2R})}{R^{Q-p}}\bigg)^\frac{1}{p-1};\\
%\label{eq:homdir2}
%&(ii)\qquad \intav_{B_{2R}}|\X w -\X u|\dx \leq c\bigg(\frac{|\mu|(B_{2R})}{R^{Q-1}}\bigg)^\frac{1}{p-1}.
%\end{align}
\end{Lem}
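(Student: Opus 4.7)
The plan is to adapt the Boccardo--Gallou\"et truncation argument (as in \cite{Duz-Min, Boc-Gal}) to the sub-elliptic setting. Only the monotonicity inequality \eqref{eq:monotone} and the Sobolev embedding of Theorem~\ref{thm:sob emb} are needed, so no commutator of horizontal vector fields enters at this stage and the proof parallels the Euclidean one. First, I subtract the weak formulations of \eqref{eq:maineq} and \eqref{eq:homdir}: for every $\varphi \in HW^{1,p}_0(B_{2R}) \cap L^{\infty}(B_{2R})$,
\begin{equation*}
\int_{B_{2R}} \langle a(x,\X u) - a(x,\X w),\, \X \varphi \rangle\, dx = \int_{B_{2R}} \varphi\, d\mu,
\end{equation*}
using that the precise representative of $u-w\in HW^{1,p}_0(B_{2R})$ makes the right-hand side meaningful. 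Then I plug in $\varphi = T_k(u-w)$, where $T_k(t) = \max(-k,\min(k,t))$, so that $\X T_k(u-w) = \X(u-w)\,\1_{\{|u-w|\leq k\}}$. Combining with \eqref{eq:monotone} and the elementary inequality $(|a|^2+|b|^2+s^2)^{(p-2)/2}|a-b|^2 \geq c|a-b|^p$ valid for $p\geq 2$, this yields for every $k>0$
\begin{equation*}
\int_{\{|u-w|\leq k\}} |\X u - \X w|^p\, dx \leq c\,k\,|\mu|(B_{2R}).
\end{equation*}

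The second step is to pass from this truncated estimate to an integrability bound on $\X u - \X w$ via the distribution function: for arbitrary $\lambda, k > 0$,
\begin{equation*}
|\{|\X u - \X w| > \lambda\}| \leq |\{|u-w|>k\}| + \lambda^{-p}\int_{\{|u-w|\leq k\}}|\X u - \X w|^p\, dx.
\end{equation*}
For the first term I apply Sobolev's inequality (Theorem~\ref{thm:sob emb}, and Morrey Theorem~\ref{thm:mor emb} when $p\geq Q$) to $T_k(u-w)\in HW^{1,p}_0(B_{2R})$, giving $|\{|u-w|>k\}|\leq c\,k^{-p^{*}}(k|\mu|(B_{2R}))^{p^{*}/p}$ with $p^{*}=Qp/(Q-p)$. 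Optimizing the auxiliary parameter $k=k(\lambda)$ between the two terms produces the Marcinkiewicz bound
\begin{equation*}
\lambda^q\, |\{|\X u - \X w|>\lambda\}| \leq c\, R^Q \left(\frac{|\mu|(B_{2R})}{R^{Q-1}}\right)^{\frac{q}{p-1}}, \qquad q = \frac{Q(p-1)}{Q-1}.
\end{equation*}
Since $q>1$ (because $Q\geq 4$ and $p\geq 2$), this embeds $\X u - \X w$ into $L^{q,\infty}(B_{2R})\hookrightarrow L^1(B_{2R})$, and integrating the distribution function over $\lambda>0$ gives precisely \eqref{eq:homdirest}.

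The monotonicity step and the final Marcinkiewicz-to-$L^1$ passage are routine; the delicate part is the admissibility of $T_k(u-w)$ as a test function against the Radon measure $\mu$ (which uses that $T_k(u-w)\in L^\infty\cap HW^{1,p}_0$ and a precise-representative argument to make the pairing meaningful) and the bookkeeping of the optimization in $k$, which must be done carefully to recover the sharp $R^{Q-1}$ scaling of the right-hand side. Once these points are dealt with, the argument transfers verbatim from the Euclidean setting since no feature of the horizontal vector fields beyond the Sobolev inequality is invoked.
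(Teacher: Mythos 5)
Your argument is correct for $2\le p<Q$ and for $p>Q$, and it reaches \eqref{eq:homdirest} by a genuinely different mechanism than the paper. The paper also truncates $u-w$, but it works with the full family of level sets $E_j=\{mj/R^\gamma<|u-w|\le m(j+1)/R^\gamma\}$, estimates each piece in $L^1$ via H\"older and Sobolev, and then sums the series, choosing the scaling parameters $m=|\mu|(B_{2R})^{1/(p-1)}$, $\gamma=(Q-p)/(p-1)$ and a large $N$ so that the tail can be absorbed (via Young's inequality when $p<Q$, directly when $p=Q$) into the left-hand side; no distribution functions appear. You instead use a single truncation $T_k(u-w)$, derive the Marcinkiewicz bound $\lambda^{q}\,|\{|\X u-\X w|>\lambda\}|\le c\,|\mu|(B_{2R})^{Q/(Q-1)}$ with $q=Q(p-1)/(Q-1)>1$ by optimizing $k=k(\lambda)$, and then integrate the distribution function with a second optimization in the cut level $\lambda_0\sim(|\mu|(B_{2R})/R^{Q-1})^{1/(p-1)}$. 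This is the classical Boccardo--Gallou\"et/B\'enilan-type route; your exponent bookkeeping is consistent (indeed $q/(p-1)=Q/(Q-1)$, so your weak-type bound is exactly the $R$-scaling needed), and it even yields slightly more than the lemma states, namely a weak-$L^{Q(p-1)/(Q-1)}$ estimate for $\X u-\X w$, whereas the paper's absorption argument goes straight to the $L^1$ bound with somewhat lighter machinery. Both proofs use only the monotonicity \eqref{eq:monotone}, the admissibility of bounded truncations as test functions (which the paper also takes for granted, and which is harmless in the application where $\mu=f\in L^q$), and the embeddings of Theorems \ref{thm:sob emb} and \ref{thm:mor emb}, so neither sees the sub-elliptic commutator structure.

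One point needs repair: the borderline case $p=Q$. Your parenthetical ``Morrey when $p\ge Q$'' does not cover it, since Theorem \ref{thm:mor emb} requires an exponent strictly larger than $Q$, while the Sobolev exponent $p^{*}=Qp/(Q-p)$ degenerates at $p=Q$. The fix is standard --- run your argument with the embedding $HW^{1,Q}_0(B_{2R})\hookrightarrow L^{m}(B_{2R})$ for some large finite $m$ (or, as the paper does, keep the Sobolev-route computation and absorb the tail without Young's inequality) --- but as written this case is not justified. For $p>Q$ your plan does work: Morrey applied to $T_k(u-w)$ shows $|u-w|$ is bounded by $c\,(R^{p-Q}|\mu|(B_{2R}))^{1/(p-1)}$, after which H\"older gives \eqref{eq:homdirest} directly, which is essentially the paper's treatment of that range.
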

\begin{proof}
By testing equation \eqref{eq:homdir} with  
$\varphi\in HW^{1,p}_0(B_{2R})$ and using equation \eqref{eq:maineq}, 
we have the weak formulation
\begin{equation}\label{eq:weakhomdir}
\int_{B_{2R}}\inp{a(x,\X u)-a(x,\X w)}{\X\varphi}\dx = \int_{B_{2R}}\varphi\, d\mu
\end{equation}
which we estimate with appropriate choices of $\varphi$, in order to show \eqref{eq:homdirest}.

First, we assume $2\leq p\leq Q$. For any $j\in\N$, we denote the following truncations 
\begin{equation*}
\psi_j = \max\bigg\{-\frac{j}{R^\gamma}, \min\Big\{\frac{u-w}{m},\frac{j}{R^\gamma}\Big\}\bigg\},\  
\varphi_j = \max\bigg\{-\frac{1}{R^\gamma}, \min\Big\{\frac{u-w}{m}-\psi_j,\frac{1}{R^\gamma}\Big\}\bigg\}, 
\end{equation*}
where the scaling constants $m,\gamma\geq 0$ are to be chosen later. Notice that, 
for each $j\in\N$, we have $|\varphi_j|\leq 1/R^\gamma$ and 
$\X\varphi_j = \frac{1}{m}(\X u-\X w)\1_{E_j}$ where 
$$E_j=\{m j/R^\gamma<|u-w|\leq m(j+1)/R^\gamma\}.$$ 
Taking $\varphi=\varphi_j$ in \eqref{eq:weakhomdir} and using \eqref{eq:monotone} with $p\geq 2$,
it is easy to obtain 
\begin{equation}\label{eq:cj}
\int_{B_{2R}\cap E_j}
|\X w -\X u|^p\dx \,\leq\, \frac{cm}{R^\gamma}|\mu|(B_{2R})
\end{equation}
for some $c=c(n,p,L)>0$. Now, using H\"older's inequality and \eqref{eq:cj}, 
we obtain
$$\int_{B_{2R}\cap E_j}|\X w -\X u|\dx \leq |E_j|^\frac{p-1}{p}\Big(\int_{B_{2R}\cap E_j}
|\X w -\X u|^p\dx\Big)^\frac{1}{p} \leq  c|E_j|^\frac{p-1}{p}(m /R^\gamma)^\frac{1}{p}|\mu|(B_{2R})^\frac{1}{p},$$
then, using the fact that $|u-w|^\kappa>(m j/R^\gamma)^\kappa$ in $E_j$, we obtain 
\begin{equation}\label{eq:1cj}
\begin{aligned}
\int_{B_{2R}\cap E_j}|\X w -\X u|\dx 
\leq \frac{c(m /R^\gamma)^\frac{1}{p}|\mu|(B_{2R})^\frac{1}{p}}{(m j/R^\gamma)^\frac{\kappa(p-1)}{p}}\bigg(\int_{B_{2R}\cap E_j} |u-w|^\kappa\dx\bigg)^\frac{p-1}{p}
\end{aligned}
\end{equation}
with $\kappa=Q/(Q-1)$. Also from \eqref{eq:cj}, note that for any $N\in\N$,   
\begin{equation}\label{eq:dj}
\begin{aligned}
\int_{B_{2R}\cap\{|u-w|\leq m N/R^\gamma\}} |\X w -\X u|^p\dx
&= \sum_{j=0}^{N-1}
\int_{B_{2R}\cap E_j}|\X w -\X u|^p\dx \\
&\leq \frac{c\,m}{R^\gamma}N|\mu|(B_{2R}).
\end{aligned}
\end{equation}
Now, we estimate the whole 
integral using \eqref{eq:dj} and \eqref{eq:1cj}, as follows:
\begin{align*}
\int_{B_{2R}}&|\X w -\X u|\dx \\
&= \int_{B_{2R}\cap\{|u-w|\leq m N/R^\gamma\}}
|\X w -\X u|\dx +
\int_{B_{2R}\cap\{|u-w|> m N/R^\gamma\}}|\X w -\X u|\dx\\
&\leq |B_{2R}|^\frac{p-1}{p}\bigg(\int_{B_{2R}\cap\{|u-w|\leq m N/R^\gamma\}} |\X w -\X u|^p\dx\bigg)^\frac{1}{p}
+ \sum_{j=N}^{\infty}
\int_{B_{2R}\cap E_j}|\X w -\X u|\dx \\
&\leq c(m /R^\gamma)^\frac{1}{p}|\mu|(B_{2R})^\frac{1}{p}
\bigg(|B_{2R}|^\frac{p-1}{p}N^\frac{1}{p} 
+ \sum_{j=N}^{\infty}\Big[\frac{1}{(m j/R^\gamma)^\kappa}\int_{B_{2R}\cap E_j} |u-w|^\kappa\dx\Big]^\frac{p-1}{p}\bigg)
\end{align*}
Using Sobolev inequality \eqref{eq:sob emb} on the second term of the above, we obtain
\begin{align*}
\int_{B_{2R}}|\X w -\X u|\dx &\leq c(m /R^\gamma)^\frac{1}{p}|\mu|(B_{2R})^\frac{1}{p}
|B_{2R}|^\frac{p-1}{p}N^\frac{1}{p} \\
&\ + c(m /R^\gamma)^{\frac{1}{p}-\frac{\kappa(p-1)}{p}}|\mu|(B_{2R})^\frac{1}{p}\,\epsilon(N)^\frac{1}{p}\Big(\int_{B_{2R}} 
|\X u-\X w|\dx\Big)^\frac{\kappa(p-1)}{p}
\end{align*}
where $\epsilon(N)=\sum_{j=N}^{\infty}1/j^{\kappa(p-1)},\,\kappa=Q/(Q-1)$ and $c=c(n,p,L)>0$. 

Now, first we consider the case $p<Q$, so that we have $\kappa(p-1)/p<1$. Then, by applying Young's inequality on the second term, we obtain
\begin{equation*}
\begin{aligned}
\int_{B_{2R}}|\X w -\X u|\dx &\leq c\Big(\frac{m}{R^\gamma}\Big)^\frac{1}{p}|\mu|(B_{2R})^\frac{1}{p}
|B_{2R}|^\frac{p-1}{p}N^\frac{1}{p}
+c\Big(\frac{m}{R^\gamma}\Big)^\frac{1+\kappa-\kappa p}{p+\kappa-\kappa p}|\mu|(B_{2R})^\frac{1}{p+\kappa-\kappa p}\\
 &\qquad + \epsilon(N)^\frac{1}{\kappa(p-1)}\Big(\int_{B_{2R}} 
|\X u-\X w|\dx\Big)
\end{aligned}
\end{equation*}
for some $c=c(n,p,L)>0$. 
Now, we make the following choice of the scaling constants, 
$$ m = |\mu|(B_{2R})^\frac{1}{p-1}\quad \text{and}\quad 
\gamma= (Q-p)/(p-1)$$ 
such that the first two terms of the above are the same. Also note that, 
since $p\geq 2>1+1/\kappa$, we have 
$\kappa(p-1)>1$ and hence, $\sum_{j=1}^{\infty}1/j^{\kappa(p-1)} = 
\zeta(\kappa(p-1))<\infty$. If $N$ is large enough, we can have 
 $$\epsilon(N)=\sum_{j=N}^{\infty}1/j^{\kappa(p-1)}<1/2^{\kappa(p-1)}$$ and thus, the 
last term of the estimate can be absolved in the left hand side. With these choices of $m,\gamma,N$, we finally obtain
\begin{equation}\label{eq:homfin}
\int_{B_{2R}}|\X w -\X u|\dx \leq 
c |\mu|(B_{2R})^\frac{1}{p-1} R^{\frac{Qp-2Q+1}{p-1}}
\end{equation}
for some $c=c(n,p,L)>0$, which immediately implies \eqref{eq:homdirest}. 

For the case of $p=Q$, the estimate \eqref{eq:homfin} also follows similary with a possibly larger $N$ and the same choices of scaling constants, i.e. $m = |\mu|(B_{2R})^{1/(Q-1)}$ and $\gamma=0$; except here 
we absolve the last term to the right hand side directly, without using Young's inequality. 

Now we assume the $p\geq Q$. Here we simply choose $\varphi=u-w$ in \eqref{eq:weakhomdir} and use \eqref{eq:monotone} together with Morrey's 
inequality \eqref{eq:mor emb} to obtain 
\begin{align*}
\int_{B_{2R}}|\X w -\X u|^p\dx &\leq c\int_{B_{2R}}|u-w|\,d\mu 
\leq c|\mu|(B_{2R})\sup_{B_{2R}} |u-w| \\
&\leq c|\mu|(B_{2R}) R^{1-\frac{Q}{p}}
\Big(\int_{B_{2R}}|\X w -\X u|^p\dx\Big)^\frac{1}{p}, 
\end{align*}
which, upon using Young's inequality, yields
\begin{equation}\label{eq:mordir}
\int_{B_{2R}}|\X w -\X u|^p\dx 
\leq c |\mu|(B_{2R})^\frac{p}{p-1} R^\frac{p-Q}{p-1}.
\end{equation}
Then, using H\"older's inequality and \eqref{eq:mordir}, we obtain
\begin{align*}
\int_{B_{2R}}|\X w -\X u|\dx &\leq |B_{2R}|^\frac{p-1}{p} \Big(\int_{B_{2R}}|\X w -\X u|^p\dx\Big)^\frac{1}{p}\\
&\leq c |\mu|(B_{2R})^\frac{1}{p-1} R^{\frac{Qp-2Q+1}{p-1}}
\end{align*}
which, just as before, implies \eqref{eq:homdirest}. 
Thus, the proof is finished. 
\end{proof}
\begin{Rem}
It is evident that by using Sobolev or Morrey inequality 
\eqref{eq:sob emb}, \eqref{eq:mor emb} on \eqref{eq:homdirest}, we can obtain the estimate 
$$  
\intav_{B_{2R}}|w -u|\dx \leq c\bigg(\frac{|\mu|(B_{2R})}{R^{Q-p}}\bigg)^\frac{1}{p-1}$$
where $u$ and $w$ are the functions stated in Lemma \ref{lem:homcomp}. 
\end{Rem}
For the next comparison estimate, we require the Dirichlet problem with freezing of  the coefficients. Letting $w\in HW^{1,p}(B_{2R})$ as weak solution of \eqref{eq:homdir}, we consider 
\begin{equation}\label{eq:frzdir}
 \begin{cases}
  \dvh a(x_0,\X v)=  0\ \ \text{in}\ B_R;\\
 \ v-w\in HW^{1,p}_0(B_R). 
 \end{cases}
\end{equation}
\begin{Lem}\label{lem:frzcomp}
Given weak solution $w\in HW^{1,p}(B_{2R})$ of \eqref{eq:homdir}, if $v\in HW^{1,p}(B_R)$ is the weak solution 
of equation \eqref{eq:frzdir}, then there exists $c=c(n,p,L)>0$ such that 
\begin{equation}\label{eq:frzcomp}
\intav_{B_R}|\X v-\X w|^p\dx\leq cL'^2R^{2\alpha}\intav_{B_R}(|\X w|+s)^p\dx.
\end{equation}
\end{Lem}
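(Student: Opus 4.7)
The natural strategy is to test both weak formulations against the same admissible function and exploit monotonicity on one side, Hölder continuity in $x$ on the other. Since $v-w\in HW^{1,p}_0(B_R)$, extending by zero it lies in $HW^{1,p}_0(B_{2R})$ as well. Testing \eqref{eq:frzdir} and \eqref{eq:homdir} with $\varphi=v-w$ and subtracting yields
\[
\int_{B_R}\inp{a(x_0,\X v)-a(x_0,\X w)}{\X v-\X w}\dx
=\int_{B_R}\inp{a(x,\X w)-a(x_0,\X w)}{\X v-\X w}\dx,
\]
after adding and subtracting the term $a(x_0,\X w)$. This is the identity on which everything rests.

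On the left hand side I would apply the monotonicity estimate \eqref{eq:monotone} (with the frozen coefficient $a(x_0,\cdot)$, which satisfies \eqref{eq:str} with the same constants), obtaining a lower bound of the form
\[
c\int_{B_R}(|\X v|^2+|\X w|^2+s^2)^{\frac{p-2}{2}}|\X v-\X w|^2\dx.
\]
Since $p\geq 2$, the elementary inequality $|\X v-\X w|^p\le c\,(|\X v|^2+|\X w|^2+s^2)^{(p-2)/2}|\X v-\X w|^2$ makes this controllable from below by $c\int_{B_R}|\X v-\X w|^p\dx$, which is the quantity we want to bound. On the right hand side I would apply the second line of \eqref{eq:str} with $x,y=x_0$ and $|x-x_0|\leq R$, giving the pointwise estimate
\[
|a(x,\X w)-a(x_0,\X w)|\leq L'R^\alpha\,|\X w|(|\X w|^2+s^2)^{\frac{p-2}{2}}.
\]

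The only delicate point is how to run Young's inequality on the resulting integral
\[
L'R^\alpha\int_{B_R}|\X w|(|\X w|^2+s^2)^{\frac{p-2}{2}}|\X v-\X w|\dx,
\]
so that the final factor in front of $\int(|\X w|+s)^p$ comes out as $L'^2R^{2\alpha}$ (and not $(L'R^\alpha)^{p/(p-1)}$). The trick is to split the integrand symmetrically as
\[
\bigl[L'R^\alpha|\X w|(|\X w|^2+s^2)^{\frac{p-2}{4}}\bigr]\cdot\bigl[(|\X w|^2+s^2)^{\frac{p-2}{4}}|\X v-\X w|\bigr],
\]
and apply $ab\le \tfrac{1}{4\eps}a^2+\eps b^2$. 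The resulting $\eps$-term produces $(|\X w|^2+s^2)^{(p-2)/2}|\X v-\X w|^2$, which, since $p\ge 2$ implies $(|\X w|^2+s^2)^{(p-2)/2}\leq(|\X v|^2+|\X w|^2+s^2)^{(p-2)/2}$, is absorbed into the lower bound on the LHS. The remaining term is
\[
\tfrac{c}{\eps}L'^2R^{2\alpha}\int_{B_R}|\X w|^2(|\X w|^2+s^2)^{\frac{p-2}{2}}\dx\;\leq\;cL'^2R^{2\alpha}\int_{B_R}(|\X w|+s)^p\dx,
\]
after using $|\X w|^2(|\X w|^2+s^2)^{(p-2)/2}\leq(|\X w|^2+s^2)^{p/2}\leq c(|\X w|+s)^p$. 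Dividing by $|B_R|$ gives \eqref{eq:frzcomp}.

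The main obstacle is really the bookkeeping in this Young splitting: a naïve application gives the wrong power of $L'R^\alpha$, so one must balance the weights $(|\X w|^2+s^2)^{(p-2)/4}$ symmetrically on the two factors and then verify (using $p\ge 2$) that the resulting weight from the $\eps$-term is indeed dominated by the one coming from monotonicity with the pair $(\X v,\X w)$. Everything else is routine, and notably the regularity of $v,w$ needed to justify the testing follows from Theorem \ref{thm:c1alpha} applied to the corresponding homogeneous Dirichlet problems.
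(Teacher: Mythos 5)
Your proof is correct, and its skeleton is the same as the paper's: test both problems with $v-w$, pit the monotonicity \eqref{eq:monotone} of $a(x_0,\cdot)$ against the H\"older-in-$x$ structure condition in \eqref{eq:str}, and close with Young's inequality plus the elementary bound $|\X v-\X w|^p\le c\,(|\X v|^2+|\X w|^2+s^2)^{\frac{p-2}{2}}|\X v-\X w|^2$ valid for $p\ge 2$. The one genuine difference is in the Young bookkeeping, and it buys you a small simplification: the paper first enlarges the weight to the full triple, applies Young in the form that leaves $(L'R^\alpha)^2\int_{B_R}(|\X w|^2+|\X v|^2+s^2)^{p/2}\dx$ on the right, and therefore needs a preliminary energy estimate (its inequality \eqref{eq:enbd}, $\int_{B_R}|\X v|^p\dx\le c\int_{B_R}(|\X w|+s)^p\dx$, obtained by testing \eqref{eq:frzdir} with $w-v$ and using \eqref{eq:ellip}) to convert the $|\X v|^p$ contribution back into $(|\X w|+s)^p$. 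Your splitting keeps the weight $(|\X w|^2+s^2)^{\frac{p-2}{4}}$ on the factor carrying $L'R^\alpha|\X w|$, so after absorption the non-absorbed term involves only $|\X w|$ and $s$, and the auxiliary energy bound becomes unnecessary; both routes give the same constant structure $cL'^2R^{2\alpha}$. One inessential quibble: no appeal to Theorem \ref{thm:c1alpha} is needed (or, for $w$, even applicable) to "justify the testing" — $v-w\in HW^{1,p}_0(B_R)$, extended by zero to $B_{2R}$, is an admissible test function directly from the definition of weak solution.
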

\begin{proof}	
First, note that by testing equation \eqref{eq:frzdir} with $w-v$ and using 
the ellipticity \eqref{eq:ellip}, it is not difficult to show the following inequality, 
\begin{equation}\label{eq:enbd}
\int_{B_R} |\X v|^p\dx \leq c\int_{B_R} (|\X w|+s)^p\dx,
\end{equation}
for some $c= c(n,p,L)$; the proof is standard.  
Also, testing both equations \eqref{eq:homdir} and \eqref{eq:frzdir} with 
$w-v$, we have that 
$$ \int_{B_{2R}} \inp{a(x,\X w)}{\X w -\X v}\dx=0=\int_{B_R}\inp{a(x_0,\X v)}{\X w -\X v}\dx .$$ 
Using the above together with \eqref{eq:monotone} and \eqref{eq:str}, we obtain
\begin{align*}
c\int_{B_R}&(|\X w|^2+|\X v|^2+s^2)^\frac{p-2}{2}|\X w-\X v|^2\dx \\
&\leq \int_{B_R}\inp{a(x_0,\X w)-a(x_0,\X v)}{\X w -\X v}\dx\\
&= \int_{B_R}\inp{a(x_0,\X w)-a(x,\X w)}{\X w -\X v}\dx\\
&\leq c L'R^\alpha \int_{B_R}(|\X w|^2+|\X v|^2+s^2)^\frac{p-2}{2}|\X w|
|\X w-\X v|\dx
\end{align*}
Using Young's inequality on the last integral of the above, it is easy to get 
$$ \int_{B_R}(|\X w|^2+|\X v|^2+s^2)^\frac{p-2}{2}|\X w-\X v|^2\dx
\leq c (L'R^\alpha)^2 \int_{B_R} (|\X w|^2+|\X v|^2+s^2)^\frac{p}{2}\dx. $$
This, together with \eqref{eq:enbd}, is enough to prove \eqref{eq:frzcomp}. 
\end{proof}

Combining Lemma \ref{lem:homcomp} and Lemma \ref{lem:frzcomp}, we obtain the following comparison estimate of weak solution $u$ of \eqref{eq:maineq} and 
weak solution $v$ of \eqref{eq:frzdir}. 
%\cite{Lu}

\begin{Cor}\label{cor:comb}
Let $u\in HW^{1,p}(\Om)$ be a weak solution of equation \eqref{eq:maineq} and let $v\in HW^{1,p}(B_R)$ be the weak solution 
of equation \eqref{eq:frzdir}, where
$w\in HW^{1,p}(B_{2R})$ given in the problem \eqref{eq:frzdir} is the weak solution of the equation \eqref{eq:homdir}. Then there exists $c=c(n,p,L)>0$ such that 
\begin{equation*}
\intav_{B_R}|\X v-\X u|\dx\leq c\big(1+(L'R^\alpha)^\frac{2}{p}\big)
\bigg(\frac{|\mu|(B_{2R})}{R^{Q-1}}\bigg)^\frac{1}{p-1} + c(L'R^\alpha)^\frac{2}{p}\intav_{B_{2R}}(|\X u|+s)\dx.
\end{equation*}
\end{Cor}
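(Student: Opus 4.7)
The plan is to use the triangle inequality $|\X v - \X u| \le |\X v - \X w| + |\X w - \X u|$ in $L^1(B_R)$, and bound each piece with one of the two preceding lemmas.

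First I would handle the second piece directly from Lemma \ref{lem:homcomp}: since $B_R \subset B_{2R}$ with $|B_{2R}|/|B_R|=2^Q$, we have
\begin{equation*}
\intav_{B_R}|\X w - \X u|\dx \le 2^Q \intav_{B_{2R}}|\X w - \X u|\dx \le c\bigg(\frac{|\mu|(B_{2R})}{R^{Q-1}}\bigg)^{\frac{1}{p-1}},
\end{equation*}
which supplies the contribution without the $(L'R^\alpha)^{2/p}$ factor.

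Next I would treat $\X v - \X w$ on $B_R$. Applying Jensen's (or Hölder's) inequality to raise $L^1$ to $L^p$, and then invoking Lemma \ref{lem:frzcomp}, gives
\begin{equation*}
\intav_{B_R}|\X v-\X w|\dx \le \bigg(\intav_{B_R}|\X v-\X w|^p\dx\bigg)^{\frac{1}{p}} \le c(L'R^\alpha)^{\frac{2}{p}}\bigg(\intav_{B_R}(|\X w|+s)^p\dx\bigg)^{\frac{1}{p}}.
\end{equation*}
The key observation is that $w$ solves the homogeneous equation \eqref{eq:homdir} with $x$-dependent coefficients satisfying \eqref{eq:str}, and the Lipschitz bound \eqref{eq:lip1} (whose proof via Caccioppoli/Moser iteration goes through verbatim in this setting, cf.\ the discussion after Theorem \ref{thm:c1alpha}) yields $\sup_{B_R}|\X w| \le c\,\intav_{B_{2R}}(|\X w|+s)\dx$. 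Therefore
\begin{equation*}
\bigg(\intav_{B_R}(|\X w|+s)^p\dx\bigg)^{\frac{1}{p}} \le \sup_{B_R}(|\X w|+s) \le c\intav_{B_{2R}}(|\X w|+s)\dx.
\end{equation*}

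Finally I would reintroduce $u$ via another triangle inequality $\intav_{B_{2R}}(|\X w|+s)\dx \le \intav_{B_{2R}}(|\X u|+s)\dx + \intav_{B_{2R}}|\X w - \X u|\dx$, bound the last term by Lemma \ref{lem:homcomp}, and combine everything. The two pieces add to give
\begin{equation*}
\intav_{B_R}|\X v - \X u|\dx \le c\bigl[1+(L'R^\alpha)^{\frac{2}{p}}\bigr]\bigg(\frac{|\mu|(B_{2R})}{R^{Q-1}}\bigg)^{\frac{1}{p-1}} + c(L'R^\alpha)^{\frac{2}{p}}\intav_{B_{2R}}(|\X u|+s)\dx,
\end{equation*}
as required. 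The only conceptual step is replacing the $L^p$-average of $|\X w|+s$ by an $L^1$-average on a slightly larger ball via the sup bound for $w$; the rest is triangle inequalities and raising/lowering exponents.
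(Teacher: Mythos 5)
Your decomposition and the first two steps (triangle inequality, Lemma \ref{lem:homcomp} for $\X w-\X u$, H\"older to pass from the $L^1$- to the $L^p$-average of $|\X v-\X w|$, then Lemma \ref{lem:frzcomp}) coincide with the paper's proof. The gap is in your ``key observation'': you bound $\big(\intav_{B_R}(|\X w|+s)^p\dx\big)^{1/p}$ by $\sup_{B_R}(|\X w|+s)$ and invoke the Lipschitz estimate \eqref{eq:lip1} for $w$. But \eqref{eq:lip1} (and \eqref{eq:lipq}, \eqref{eq:unlip}) is stated, and is only available, for solutions of the \emph{frozen-coefficient} equation \eqref{eq:frzeq}: it comes from Moser iteration on the Caccioppoli-type inequalities of \cite{Zhong}, which are obtained by differentiating the equation in the horizontal and vertical directions. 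The function $w$ solves \eqref{eq:homdir}, whose coefficients $a(x,z)$ depend on $x$ only in a H\"older-continuous way by \eqref{eq:str}, so the equation cannot be differentiated in $x$ and that machinery does not go through ``verbatim''; indeed, gradient boundedness for such $x$-dependent coefficients is essentially part of what the paper is in the process of proving (via comparison with the frozen problem), so assuming it here is close to circular. Applying the sup bound to $v$ instead does not help, since you would first need to control $\X v$ by $\X w$, which is the step under construction.

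The paper avoids this by using only energy-level information on $w$: a sub-elliptic reverse H\"older inequality together with Gehring's lemma (citing \cite{ZGold}) gives
\begin{equation*}
\Big(\intav_{B_R}(|\X w|+s)^p\dx\Big)^{\frac{1}{p}}\;\leq\; c\intav_{B_{2R}}(|\X w|+s)\dx,
\end{equation*}
and this self-improving property follows from the Caccioppoli inequality for $w$ itself (testing with $w$ minus a constant times a cutoff), which is perfectly valid for $x$-dependent coefficients satisfying \eqref{eq:str}. After that, your final step (triangle inequality on $B_{2R}$ and Lemma \ref{lem:homcomp} again) is exactly as in the paper. So to repair your argument, replace the sup-bound step by this reverse H\"older/Gehring estimate; as written, the proof has a genuine gap at that point.
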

\begin{proof}
First, notice that H\"older's inequality and \eqref{eq:frzcomp} imply
\begin{equation}\label{eq:frzcomp'}
\intav_{B_R}|\X v-\X w|\dx\leq c(L'R^\alpha)^\frac{2}{p}
\Big(\intav_{B_R}(|\X w|+s)^p\dx\Big)^\frac{1}{p}.
\end{equation}
Hence, using \eqref{eq:homdirest} and \eqref{eq:frzcomp'}, we obtain 
\begin{equation}\label{eq:s1}
\begin{aligned}
\intav_{B_R}|\X v-\X u|\dx&\leq \intav_{B_R}|\X w-\X u|\dx + \intav_{B_R}|\X v-\X w|\dx\\
&\leq c\bigg(\frac{|\mu|(B_{2R})}{R^{Q-1}}\bigg)^\frac{1}{p-1}
+c(L'R^\alpha)^\frac{2}{p}
\Big(\intav_{B_R}(|\X w|+s)^p\dx\Big)^\frac{1}{p}.
\end{aligned}
\end{equation}
We estimate the last integral 
using sub-elliptic reverse H\"older's inequality and Gehring's lemma, see \cite{ZGold}, to obtain   
\begin{equation}\label{eq:s2}
\begin{aligned}
\Big(\intav_{B_R}(|\X w|+s)^p\dx\Big)^\frac{1}{p} &\leq 
c\intav_{B_{2R}}(|\X w|+s)\dx \\
&\leq c\intav_{B_{2R}}(|\X u|+s)\dx + c\intav_{B_{2R}}|\X u-\X w|\dx\\
&\leq c\intav_{B_{2R}}(|\X u|+s)\dx + c\bigg(\frac{|\mu|(B_{2R})}{R^{Q-1}}\bigg)^\frac{1}{p-1}, 
\end{aligned}
\end{equation}
where the last inequality follows from \eqref{eq:homdirest}. 
Now it is easy to see that by combining \eqref{eq:s1} and \eqref{eq:s2}, 
the proof is finished.
\end{proof}

\section{Proof of the Theorem \ref{thm:c1au}}\label{sec:proofthms}
We shall prove Theorem \ref{thm:c1au} in this section. 
As before, here we maintain $u\in HW^{1,p}(\Om)$ as a weak solution of the equation \eqref{eq:maineq} and fix 
some arbitrary $x_0\in \Om$ and denote the metric balls $B_\rho=B_\rho(x_0)$ for every $\rho>0$. The comparison 
estimates of Section \ref{sec:est} shall lead to the necessary estimates for $u$. 

With respect to the given data, let us set 
\begin{equation}\label{eq:r0}
 \bar R=\bar R(n,p,L,L',\alpha,\dist(x_0,\del\Om))>0,
\end{equation}
which shall be chosen as small as required as we proceed, finally the minimum of every reductions of $\bar R$, is to be considered.
Let $\bar R\leq \min\{1, \frac{1}{2}\dist(x_0,\del\Om), L'^{-1/\alpha}\}$ to begin with, so that for any $R<\bar R$, we have 
$R, L'R^\alpha <1$ and $B_R\subset \Om$. 

The following lemma is a consequence of the uniform Lipschitz estimate \eqref{eq:unlip}.

\begin{Lem}\label{lem:amlip}	
For any $0<\rho\leq R \leq \bar R/2$, we have the estimate
\begin{equation}\label{eq:amlip}
\begin{aligned}
\int_{B_\rho}(|\X u|+s)\dx\,&\leq\, c\left(\frac{\rho}{R}\right)^{Q}\int_{B_R} (|\X u|+s)\dx 
\,+\, cR^{Q}\bigg(\frac{|\mu|(B_{2R})}{R^{Q-1}}\bigg)^\frac{1}{p-1}\\
&\quad + c(L'R^\alpha)^\frac{2}{p}\int_{B_{2R}}(|\X u|+s)\dx.
\end{aligned} 
\end{equation}
%for any $\kappa_0\leq\kappa<Q$, 

\end{Lem}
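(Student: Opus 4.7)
The strategy is a standard comparison/perturbation argument: bound $\X u$ locally by the horizontal gradient of a $C^{1,\beta}$ comparison function on which the homogeneous estimate \eqref{eq:unlip} applies, and control the comparison error by Corollary \ref{cor:comb}.

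Specifically, the plan is to introduce the Dirichlet problems \eqref{eq:homdir} and \eqref{eq:frzdir} on $B_{2R}\subset\Om$, producing $w\in HW^{1,p}(B_{2R})$ and then $v\in HW^{1,p}(B_R)$ solving the homogeneous frozen-coefficient equation \eqref{eq:frzdir}. By Theorem \ref{thm:c1alpha}, $v\in C^{1,\beta}(B_R)$, so the uniform Lipschitz-type estimate \eqref{eq:unlip} applies to $v$ on any concentric ball $B_\rho\subset B_{R/2}$:
\[
\int_{B_\rho}|\X v|\dx\,\leq\, c\left(\frac{\rho}{R}\right)^{Q}\int_{B_{R}}(|\X v|+s)\dx.
\]
Adding $s|B_\rho|\leq c(\rho/R)^Q s|B_R|$ to both sides gives the same inequality with $|\X v|$ replaced by $|\X v|+s$.

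Next, I would apply the triangle inequality twice: first,
\[
\int_{B_\rho}(|\X u|+s)\dx \,\leq\, \int_{B_\rho}(|\X v|+s)\dx + \int_{B_\rho}|\X u-\X v|\dx,
\]
and second, inside the bound on $\int_{B_R}(|\X v|+s)\dx$,
\[
\int_{B_R}(|\X v|+s)\dx \,\leq\, \int_{B_R}(|\X u|+s)\dx + \int_{B_R}|\X u-\X v|\dx.
\]
Substituting the second into the first, and using $(\rho/R)^Q\leq 1$ to absorb the correction $c(\rho/R)^Q\int_{B_R}|\X u-\X v|\dx$ into $c\int_{B_R}|\X u-\X v|\dx$, one obtains
\[
\int_{B_\rho}(|\X u|+s)\dx \,\leq\, c\left(\frac{\rho}{R}\right)^{Q}\int_{B_R}(|\X u|+s)\dx + c\int_{B_R}|\X u-\X v|\dx.
\]
The last term is handled by Corollary \ref{cor:comb} multiplied by $|B_R|=c\, R^Q$: since $\bar R$ is chosen so that $L' R^\alpha\leq 1$, the factor $1+(L'R^\alpha)^{2/p}$ is bounded by an absolute constant, and the two remaining terms in the statement of the lemma appear directly.

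The range $R/2<\rho\leq R$ is immediate because the first term on the right of \eqref{eq:amlip} then already dominates $\int_{B_\rho}(|\X u|+s)\dx$ up to a constant depending only on $Q$. The main (and essentially only) obstacle is a bookkeeping one: ensuring the \emph{inhomogeneous} weight "$+s$" scales correctly with $(\rho/R)^Q$, which is immediate from $s|B_\rho|=cs\rho^Q=c(\rho/R)^Q\,s|B_R|$. No delicate estimate is needed beyond \eqref{eq:unlip} and Corollary \ref{cor:comb}, both of which are already in hand.
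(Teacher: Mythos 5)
Your proposal is correct and follows essentially the same argument as the paper: split via the triangle inequality using the frozen-coefficient comparison solution $v$ of \eqref{eq:frzdir}, apply the uniform estimate \eqref{eq:unlip} to $v$, and control both comparison terms $\int|\X u-\X v|$ by Corollary \ref{cor:comb} scaled by $|B_R|\sim R^Q$, with $L'R^\alpha\leq 1$ from the choice of $\bar R$. Your extra remarks on the trivial range $R/2<\rho\leq R$ and on adding $s|B_\rho|$ are harmless bookkeeping that the paper leaves implicit.
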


\begin{proof}
We denote comparison 
function $v$ as the weak solution of equation \eqref{eq:frzdir}, as before.  Then we write 
\begin{equation}\label{o1}
\int_{B_\rho}(|\X u|+s)\dx \leq \int_{B_\rho}(|\X v|+s)\dx + \int_{B_\rho}|\X u-\X v|\dx 
\end{equation}
The first term is estimated from 
\eqref{eq:unlip} as 
\begin{equation}\label{o2}	
\begin{aligned}
\int_{B_\rho}(|\X v|+s)\dx &\leq  c\left(\frac{\rho}{R}\right)^Q\int_{B_R}(|\X v|+s)\dx \\
&\leq c\left(\frac{\rho}{R}\right)^Q\int_{B_R}(|\X u|+s)\dx +c\left(\frac{\rho}{R}\right)^Q\int_{B_R}|\X v-\X u|\dx.
\end{aligned}
\end{equation}
The last terms of \eqref{o1} and \eqref{o2} are estimated by Corollary \ref{cor:comb} and we end up with \eqref{eq:amlip}. This  concludes the proof. 

\end{proof}

The following Lemma is similar to a well-known lemma of 
Campanato \cite{Camp},\cite{Gia-Giu--sharp}. The proof follows along the same lines as in
{\cite[Lemma 2.1]{Gia}}.
%provided in the Appendix. 
 \begin{Lem}\label{lem:camp}
Let $\phi :(0,\infty)\to [0,\infty)$ be a non-decreasing functions, $A> 1$ and $\epsilon\geq 0$ be fixed constants. 
Let  $\psi, \Psi :(0,\infty)\to [0,\infty)$ be functions such that $\sum_{j=0}^\infty \psi(t^j r) \leq \Psi(r)$ for any $0<t<t_0<1$. 
Given any $a>0$, suppose that 
\begin{equation}\label{camp1}
 \phi(\rho)  \leq A\left[\left(\frac{\rho}{r}\right)^a + \epsilon\right]\phi(r)  + r^a\psi(r)
\end{equation}
holds for any $ 0<\rho < r\leq R_0$,
 then there exists constants $\epsilon_0=\epsilon_0(A,a)>0$ and $c=c(A, a)>0$ such that if $ \epsilon \leq \epsilon_0$, then 
for all $0<\rho<r\leq R_0$, we have
\begin{equation}\label{camp2}
\phi(\rho)  \leq c\left[\left(\frac{\rho}{r}\right)^{a-\bar\eps}\phi(r) +
  \rho^{a-\bar\eps}r^{\bar\eps} \Psi(r)\right], 
\end{equation}
for any $0<\bar\eps<a$. 
\end{Lem}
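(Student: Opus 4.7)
The plan is to recover \eqref{camp2} by iterating \eqref{camp1} at geometric scales. Fix $\bar\eps\in(0,a)$. First I would pick $t=t(A,a,\bar\eps)\in(0,t_0)$ small enough that $At^a\leq\tfrac12 t^{a-\bar\eps}$, which amounts to $t^{\bar\eps}\leq 1/(2A)$, and then choose $\epsilon_0=\epsilon_0(A,a,\bar\eps)$ so that $A\epsilon_0\leq\tfrac12 t^{a-\bar\eps}$. With $\epsilon\leq\epsilon_0$, applying \eqref{camp1} with the particular choice $\rho=tr$ collapses the bracket and yields the one-step decay
\begin{equation*}
\phi(tr)\leq t^{a-\bar\eps}\phi(r)+r^a\psi(r).
\end{equation*}

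Next, I would iterate this inequality by applying it in turn at scales $r,\,tr,\,t^2 r,\dots$ to reach $t^k r$; a straightforward induction gives
\begin{equation*}
\phi(t^k r)\leq t^{k(a-\bar\eps)}\phi(r)+r^a\sum_{j=0}^{k-1}t^{j(a-\bar\eps)+(k-1-j)a}\psi(t^{k-1-j}r).
\end{equation*}
The exponent in the sum equals $(k-1)a-j\bar\eps$, which is at least $(k-1)(a-\bar\eps)$, so since $t<1$ every factor $t^{j(a-\bar\eps)+(k-1-j)a}$ is bounded above by $t^{(k-1)(a-\bar\eps)}$. Pulling this common bound out and invoking the summability hypothesis $\sum_{i\geq 0}\psi(t^i r)\leq\Psi(r)$ exactly once, together with the elementary identity $r^a t^{(k-1)(a-\bar\eps)}=t^{-(a-\bar\eps)}(t^k r)^{a-\bar\eps}r^{\bar\eps}$, I would arrive at
\begin{equation*}
\phi(t^k r)\leq t^{k(a-\bar\eps)}\phi(r)+t^{-(a-\bar\eps)}(t^k r)^{a-\bar\eps}r^{\bar\eps}\Psi(r).
\end{equation*}

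Finally, for an arbitrary pair $0<\rho\leq r\leq R_0$ I would pick the unique integer $k\geq 0$ with $t^{k+1}r<\rho\leq t^k r$ and use monotonicity of $\phi$ to conclude $\phi(\rho)\leq\phi(t^k r)$. The trivial bounds $(t^k r)^{a-\bar\eps}\leq t^{-(a-\bar\eps)}\rho^{a-\bar\eps}$ and $t^{k(a-\bar\eps)}\leq t^{-(a-\bar\eps)}(\rho/r)^{a-\bar\eps}$, both of which require $a>\bar\eps$, then convert the dyadic estimate into exactly \eqref{camp2} with $c=t^{-2(a-\bar\eps)}$. The only delicate point of the whole argument is the exponent bookkeeping in the telescoping sum, which must be arranged so that the $\psi$-series collapses to one factor of $\Psi(r)$ rather than an extra divergent geometric series; once that is handled, the proof runs parallel to the classical Campanato iteration in \cite[Lemma~2.1]{Gia}.
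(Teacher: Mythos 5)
Your argument is correct and follows essentially the same route as the paper's proof: a one-step decay estimate at scale $\rho=tr$ with the $\epsilon$-term absorbed by the choice of $\epsilon_0$, iteration along the scales $t^k r$ using the summability of $\psi$, and then interpolation to arbitrary $\rho$ via the monotonicity of $\phi$, exactly as in the Campanato-type iteration of the paper. The only (shared) caveat is that, as in the paper's own proof, your $t$, $\epsilon_0$ and $c$ in fact depend on $\bar\eps$ as well through the requirement $t^{\bar\eps}\leq 1/(2A)$, which is harmless for the application.
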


\begin{proof}%[Proof of Lemma \ref{lem:camp}]
We fix $0<r\leq R_0$. 
Notice that, for any $0<t<1$, \eqref{camp1} implies 
$$ \phi(tr) \leq At^a\left(1+ \frac{\epsilon}{t^a}\right)\phi(r) + r^a\psi(r). $$ 
%Since, $b<a$, we choose $\gamma=\gamma(a,b)$ so that $b<\gamma<a$. 
We fix some $t<t_0$ and let $\epsilon_0 \leq t^a$, so that for every $\epsilon\leq\epsilon_0$, we have 
$$At^a(1+\epsilon/t^a) \leq 2At^a.$$ Using this on the above, we get
\begin{equation}\label{campit}
\phi(tr) \leq 2At^a \phi(r) + r^a \psi(r).
\end{equation}
Now, we iterate \eqref{campit} as follows; for any $k\in \N$,  
\begin{equation*}
\begin{aligned}
\phi(t^{k+1}r) &\leq 2At^a \phi(t^k r) + t^{ka}r^a \psi(t^kr)\\
&\leq (2At^a)^2 \phi(t^{k-1}r) + 2At^{ka}r^a\psi(t^{k-1}r) + t^{ka}r^a \psi(t^kr)\\
&\leq (2At^a)^3 \phi(t^{k-2}r) + (2A)^2 t^{ka}r^a\psi(t^{k-2}r) +2At^{ka}r^a\psi(t^{k-1}r) + t^{ka}r^a \psi(t^kr)\\
&\leq \ldots \leq (2At^a)^{k+1} \phi(r) + t^{ka}r^a\sum_{j=0}^k (2A)^j \psi(t^{k-j}r).
\end{aligned}
\end{equation*}
Since, $\sum_{j=0}^\infty \psi(t^j r) \leq \Psi(r)$ as given, we have 
$$ \phi(t^{k+1}r) \leq (2At^a)^{k+1}\phi (r) + (2At^a)^{k}r^a \Psi(r).$$
Now, given any $0<\bar\eps<a$, we can choose $t$ small enough such that $t^{\bar\eps}\leq \frac{1}{2A}$ and hence 
$2At^a\leq t^{a-\bar\eps}$. Then, we have 
\begin{equation}\label{cff}
\begin{aligned}
\phi(t^{k+1}r) \leq c \Big[ t^{(k+1)(a-\bar\eps)}\phi(r) + t^{k(a-\bar\eps)}r^a \Psi(r) \Big]
\end{aligned}
\end{equation}
for some $c=c(A,a)>0$. 
Now, given any $\rho<r$, we can choose $k\in \N$ such that, we have 
$ t^{k+1}r < \rho\leq t^k r$. Then, \eqref{cff} implies
$$ \phi(t\rho) \leq \phi(t^{k+1}r) 
\leq c\left(\frac{\rho}{r}\right)^{a-\bar\eps}\phi(r) + \frac{c}{t^{a-\bar\eps}} \rho^{a-\bar\eps} r^{\bar\eps} \Psi(r), $$ 
which, with a rescaling of $\rho$ by constants dependent on $A,a$, yields \eqref{camp2}. This completes the proof.
\end{proof}
Using the above Lemma together with Lemma \ref{lem:amlip}, we obtain an almost-Lipschitz estimate, as follows. 
\begin{Prop}\label{prop:Aml}
There exists $c=c(n,p,L)>0$ such that,
\begin{equation}\label{eq:Aml}
\int_{B_r}(|\X u|+s)\dx\leq c \left(\frac{r}{R}\right)^{Q-\bar\eps}
\bigg[\int_{B_R}(|\X u|+s)\dx + R^Q\W{\frac{1}{p}}{p}(x_0,R)\bigg]
\end{equation}
holds for any $0<\bar\eps <Q$ and $0<r\leq R\leq \bar R$.
\end{Prop}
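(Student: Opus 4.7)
The plan is to apply the Campanato-type iteration Lemma \ref{lem:camp} to the non-decreasing function $\phi(\rho) := \int_{B_\rho}(|\X u|+s)\dx$. The first step is to recast Lemma \ref{lem:amlip} in the two-radii form \eqref{camp1}. Applying Lemma \ref{lem:amlip} with $R$ replaced by $R/2$ and using $\phi(R/2)\le \phi(R)$, I obtain, for all $0<\rho\le R/2\le \bar R/2$,
\begin{equation*}
\phi(\rho) \leq c\Big[(\rho/R)^Q + (L'R^\alpha)^{2/p}\Big]\phi(R) + cR^Q\,\psi(R),
\end{equation*}
with $\psi(r) := (|\mu|(B_r)/r^{Q-1})^{1/(p-1)}$. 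This matches \eqref{camp1} with exponent $a=Q$, perturbation $\epsilon = (L'R^\alpha)^{2/p}$, and a constant $A=A(n,p,L)$.

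The second step is to make a further reduction of $\bar R$ so that $(L'\bar R^\alpha)^{2/p}\le \epsilon_0(A,Q)$, where $\epsilon_0$ is the smallness threshold from Lemma \ref{lem:camp}; this introduces the $(L',\alpha)$-dependence of $\bar R$. For the summability hypothesis, Lemma \ref{lem:denswolf} applied with $H=1/t$ gives, for the specific $t=t(A,Q,\bar\eps)$ chosen inside the Campanato iteration,
\begin{equation*}
\sum_{j=0}^\infty \psi(t^j R) \leq c\,\W{1/p}{p}(x_0,2R) =: \Psi(R),
\end{equation*}
with $c=c(n,p,L)>0$ (the constant absorbs the $t$-dependence once $t$ is fixed). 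Lemma \ref{lem:camp} then yields, for any prescribed $0<\bar\eps<Q$,
\begin{equation*}
\phi(\rho) \le c\Big[(\rho/R)^{Q-\bar\eps}\phi(R) + \rho^{Q-\bar\eps}R^{\bar\eps}\Psi(R)\Big] = c(\rho/R)^{Q-\bar\eps}\Big[\phi(R) + R^Q\,\W{1/p}{p}(x_0,2R)\Big],
\end{equation*}
after using $\rho^{Q-\bar\eps}R^{\bar\eps}=(\rho/R)^{Q-\bar\eps}R^Q$.

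This is \eqref{eq:Aml} on the range $0<\rho\le R/2$ but with Wolff radius $2R$ in place of $R$. A harmless relabeling $R\mapsto R/2$ (still valid since $R\le\bar R$) corrects the Wolff radius to $R$, and the remaining range $R/2<\rho\le R$ is covered by the crude monotonicity bound $\phi(\rho)\le \phi(R)\le 2^{Q-\bar\eps}(\rho/R)^{Q-\bar\eps}\phi(R)$; this delivers \eqref{eq:Aml} on the full range $0<r\le R\le \bar R$.

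The main (mild) obstacle is recognising that the term $(L'R^\alpha)^{2/p}\phi(2R)$ on the right of \eqref{eq:amlip} plays the role of the perturbation $\epsilon\phi(r)$ in \eqref{camp1}, once one collapses $\phi(2R)$ to $\phi(R)$ via monotonicity and shrinks $\bar R$ to make the prefactor fall below $\epsilon_0$. Beyond this, everything is a mechanical application of the Campanato lemma fed by the Wolff-potential density bound of Lemma \ref{lem:denswolf}.
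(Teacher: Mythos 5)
Your proposal is correct and follows essentially the same route as the paper: recast Lemma \ref{lem:amlip} as the hypothesis \eqref{camp1} of Lemma \ref{lem:camp} with $a=Q$ and perturbation $\epsilon=(L'R^\alpha)^{2/p}$, shrink $\bar R$ so that $\epsilon\le\epsilon_0$, and verify the summability hypothesis through the Wolff-density bound of Lemma \ref{lem:denswolf}. Your handling of the $2R$-versus-$R$ radii (the relabeling and the crude bound on the top range of radii) just makes explicit what the paper compresses into the phrase ``with appropriate scaling.''
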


\begin{proof}
First, let us fix $0<r\leq \bar R$ and denote 
$$ \phi(r)= \int_{B_r}(|\X u|+s)\dx \quad \text{and}\quad w_\mu(r)= \bigg(\frac{|\mu|(B_{r})}{r^{Q-1}}\bigg)^\frac{1}{p-1}.$$ 
We recall \eqref{eq:amlip} with appropriate scaling, to have 
$$\phi(\rho)\leq c\left(\frac{\rho}{r}\right)^{Q}\phi(r) + c\,r^Q w_\mu(r) + (L'r^\alpha)^\frac{2}{p}\phi(r) ,$$
for any $\rho\leq r$ and $c=c(n,p,L)>0$. We can apply Lemma \ref{lem:camp} on the above with $a=Q$ and using approriate reduction 
$ (L'{\bar R}^\alpha)^\frac{2}{p} \leq \epsilon_0(n,p,L)$. Recalling \eqref{eq:denswolf}, notice that $w_\mu$ satisfy the summability condition of Lemma \ref{lem:camp} and we obtain 
$$ \phi(r)  \leq c\left[\left(\frac{r}{R}\right)^{Q-\bar\eps}\phi(R) +
  r^{Q-\bar\eps}R^{\bar\eps} \W{\frac{1}{p}}{p}(x_0,R)\right], $$
for every $0<r\leq R\leq \bar R$, and hence we have \eqref{eq:Aml}. This completes the proof. 
\end{proof}

Now, we use the estimate \eqref{eq:intoscg} along with the above estimates to prove $C^{1,\gamma}$ regularity of $u$. We continue to assume $\bar R$ subject to reductions with dependence of data as in \eqref{eq:r0}. First, we have the following lemma.
\begin{Lem}\label{lem:intosc}
 There exist 
$\beta=\beta(n,p,L)\in(0,1)$ and $c=c(n,p,L)>0$ such that, for every 
$0<\varrho < R < \bar R/2$, the following estimate holds:
\begin{equation*}
\begin{aligned}
\intav_{B_\varrho}|\X u - (\X u)_{B_\varrho}|\dx &\,\leq\, c\Big(\frac{\varrho}{R}\Big)^\beta  \intav_{B_{ R}}(|\X u|+s)\dx\\
 &\ + c\Big(\frac{R}{\varrho}\Big)^Q\left[
\bigg(\frac{|\mu|(B_{2R})}{R^{Q-1}}\bigg)^\frac{1}{p-1} +(L'R^\alpha)^\frac{2}{p}\intav_{B_{2R}}(|\X u|+s)\dx\right].
\end{aligned}
\end{equation*}
\end{Lem}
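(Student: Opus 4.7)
The plan is to use the standard harmonic-replacement trick: compare $u$ with the solution $v\in HW^{1,p}(B_R)$ of the frozen Dirichlet problem \eqref{eq:frzdir}. Since the coefficient has been frozen at $x_0$, Theorem \ref{thm:c1alpha} gives $v\in C^{1,\beta}_{\loc}(B_R)$, so the integral oscillation decay \eqref{eq:intoscg} is available for $v$; on the other hand, the deviation $\X u-\X v$ is controlled by Corollary \ref{cor:comb}. The target inequality will then come by decomposing the oscillation of $\X u$ into an oscillation of $\X v$ (which decays in $\varrho$) plus a comparison defect (which is independent of $\varrho$ but small in the data).

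The first step is the usual three-term split: since averages minimize mean $L^1$ deviation, for $0<\varrho<R$ one has
$$\intav_{B_\varrho}|\X u - (\X u)_{B_\varrho}|\dx \leq 2\intav_{B_\varrho}|\X u - \X v|\dx + \intav_{B_\varrho}|\X v - (\X v)_{B_\varrho}|\dx.$$
For the first term I enlarge the domain at the cost of a factor $(R/\varrho)^Q$, obtaining $\intav_{B_\varrho}|\X u-\X v|\dx\leq (R/\varrho)^Q\intav_{B_R}|\X u-\X v|\dx$, and then invoke Corollary \ref{cor:comb}; since $\bar R$ has been reduced so that $L'\bar R^\alpha\leq 1$, the prefactor $1+(L'R^\alpha)^{2/p}$ is at most $2$ and is absorbed. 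For the second term I apply \eqref{eq:intoscg} to $v$ on $B_{R/2}$ when $\varrho<R/2$, and use the crude bound $\intav_{B_\varrho}|\X v-(\X v)_{B_\varrho}|\dx\leq c\intav_{B_R}(|\X v|+s)\dx\leq c(\varrho/R)^\beta\intav_{B_R}(|\X v|+s)\dx$ in the remaining range $R/2\leq \varrho<R$. In either case
$$\intav_{B_\varrho}|\X v - (\X v)_{B_\varrho}|\dx \leq c\Big(\frac{\varrho}{R}\Big)^\beta \intav_{B_R}(|\X v|+s)\dx,$$
and one further triangle step combined with Corollary \ref{cor:comb} replaces $\intav_{B_R}(|\X v|+s)\dx$ by $\intav_{B_R}(|\X u|+s)\dx$ plus a measure/coefficient error of the same form as in Corollary \ref{cor:comb}.

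Assembling the pieces, the terms multiplied by the small exponent $(\varrho/R)^\beta$ coming from Corollary \ref{cor:comb} are trivially dominated by the analogous terms multiplied by $(R/\varrho)^Q$, since $(\varrho/R)^\beta\leq 1\leq (R/\varrho)^Q$ throughout $\varrho<R$; this reshuffling produces exactly the claimed inequality. There is no substantive analytic obstacle — the argument is a bookkeeping synthesis of Corollary \ref{cor:comb}, Theorem \ref{thm:c1alpha}, and the homogeneous oscillation decay \eqref{eq:intoscg} — and the only mild care required is bridging the two regimes $\varrho<R/2$ and $R/2\leq\varrho<R$ in the application of \eqref{eq:intoscg}, together with the repeated use of the running smallness $L'R^\alpha\leq 1$ to ignore the $(1+(L'R^\alpha)^{2/p})$ prefactor of Corollary \ref{cor:comb}.
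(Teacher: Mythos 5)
Your proposal is correct and follows essentially the same route as the paper: the triangle-inequality split against the frozen-coefficient comparison function $v$ of \eqref{eq:frzdir}, the homogeneous oscillation decay \eqref{eq:intoscg} applied to $v$, the $(R/\varrho)^Q$ enlargement of the comparison term, and Corollary \ref{cor:comb} to convert $\intav_{B_R}|\X u-\X v|\dx$ and $\intav_{B_R}(|\X v|+s)\dx$ into the stated data terms. The minor differences (your two-regime application of \eqref{eq:intoscg} and the absorption of the $1+(L'R^\alpha)^{2/p}$ prefactor) are harmless bookkeeping.
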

\begin{proof}
We define the comparison 
functions $w$ and $v$ as weak solutions of equations \eqref{eq:homdir} and \eqref{eq:frzdir}, as before. Then we have 
\begin{equation}\label{eq:io1}
\begin{aligned}
\intav_{B_\varrho}|\X u - (\X u)_{B_\varrho}|\dx &\leq 
2\intav_{B_\varrho}|\X u - (\X v)_{B_\varrho}|\dx \\
&\leq 2\intav_{B_\varrho}|\X v - (\X v)_{B_\varrho}|\dx 
+2\intav_{B_\varrho}|\X u - \X v|\dx . 
\end{aligned}
\end{equation}
Now, we shall estimate both terms of the right hand side of \eqref{eq:io1} 
seperately. 

Using \eqref{eq:intoscg}, we estimate 
the first term of \eqref{eq:io1} as 
\begin{equation*}
\begin{aligned}
\intav_{B_\varrho}|\X v - (\X v)_{B_\varrho}|\dx &\leq c\Big(\frac{\varrho}{R}\Big)^\beta 
\intav_{B_{R}}(|\X v|+s)\dx \\
&\leq  c\Big(\frac{\varrho}{R}\Big)^\beta \intav_{B_{R}}(|\X u|+s)\dx  
+  c\Big(\frac{\varrho}{R}\Big)^\beta \intav_{B_{R}}|\X v-\X u|\dx 
\end{aligned}
\end{equation*}
 
The second term of \eqref{eq:io1} is estimated simply as 
\begin{equation*}
\intav_{B_\varrho}|\X u - \X v|\dx \leq 
c\Big(\frac{R}{\varrho}\Big)^Q\intav_{B_R}|\X u - \X v|\dx. 
\end{equation*}
Using the above estimates in \eqref{eq:io1}, together with  
Corollary \ref{cor:comb} to estimate the integral of $|\X u -\X v|$, 
the proof is finished.
\end{proof}
Now we are ready to prove Theorem \ref{thm:c1au}. An extra dependence on $q$ is assumed on $\bar R$, where $q>Q$ is as in the statement of Theorem \ref{thm:c1au}. 

\begin{proof}[Proof of Theorem \ref{thm:c1au}]
Let us assume the notation $w_\mu$ for the density of the Wolff potential, as used in the previous subsection. 
From Lemma \ref{lem:intosc}, we get
\begin{equation}\label{eq:hold1}
\begin{aligned}
\int_{B_\varrho}|\X u - (\X u)_{B_\varrho}|\dx \,\leq\, c\Big(\frac{\varrho}{r}\Big)^{Q+\beta} & \int_{B_{r}}(|\X u|+s)\dx
 + c\, r^Q w_\mu(r)\\
 &\qquad+c(L'r^\alpha)^\frac{2}{p}\int_{B_{r}}(|\X u|+s)\dx
\end{aligned}
\end{equation}
and from \eqref{eq:Aml} of Proposition \ref{prop:Aml}, we have, 
\begin{equation}\label{eq:Aml2}
\int_{B_r}(|\X u|+s)\dx\leq c\left(\frac{r}{R}\right)^{Q-\bar\eps}
\bigg[\int_{B_R}(|\X u|+s)\dx + R^Q\W{\frac{1}{p}}{p}(x_0,R)\bigg].
\end{equation}
We use \eqref{eq:Aml2} on \eqref{eq:hold1} to obtain the following estimate, 
\begin{equation}\label{hl1}
\begin{aligned}
\int_{B_\varrho}|\X u - (\X u)_{B_\varrho}|\dx \,&\leq\, c 
\Big(\frac{\varrho^{Q+\beta}R^{\bar\eps}}{r^{\beta+\bar\eps} R^{Q}}\Big)
\bigg[\int_{B_R}(|\X u|+s)\dx + R^Q\W{\frac{1}{p}}{p}(x_0,R)\bigg]\\
 &\quad  + c\, r^Q  \bigg[w_\mu(r)\, +  (L'r^\alpha)^\frac{2}{p}\intav_{B_{r}}(|\X u|+s)\dx\bigg] .
\end{aligned}
\end{equation}
for every $0<\varrho\leq r\leq R\leq \bar R$. 
Now, given $\mu=f\in L^q_\loc (\Om)$ for some $q>Q$, then by H\"older's inequality we have 
$$ \frac{|\mu|(B_r)}{r^{Q-1}} = \frac{1}{r^{Q-1}}\int_{B_r}|f|\dx \leq  \frac{|B_r|^{1-1/q}}{r^{Q-1}} \Big( \int_{B_r}|f|^q\dx\Big)^\frac{1}{q}\leq  c\, r^{1-\frac{Q}{q}}\|f\|_{L^q} .$$ 
Letting $\delta\leq (1-Q/q)/(p-1)$, the above implies that $w_\mu(r) \leq c\, r^\delta \|f\|_{L^q}^{1/(p-1)}$, and 
$0<\delta<1$ since $q>Q$ and $p\geq 2$. The same upper bound is also satisfied by the Wolff potential due to 
\eqref{eq:denswolf}. Furthermore, we assume $\bar\eps<2\alpha/p$ and 
$\delta<2\alpha/p-\bar\eps$ so that using \eqref{eq:Aml} again for the last term of \eqref{hl1} 
and a further reduction $L' \bar R^{\alpha-p(\delta +\bar\eps)/2}< 1$, leads to
\begin{equation*}
\begin{aligned}
\int_{B_\varrho}|\X u - (\X u)_{B_\varrho}|\dx \leq c 
\Big(\frac{\varrho^{Q+\beta}}{r^{\beta+\bar\eps}} + r^{Q+\delta}\Big)
\bigg[\intav_{B_R}(|\X u|+s)\dx + \|f\|_{L^q}^{1/(p-1)}\bigg], 
\end{aligned}
\end{equation*}
for every $0<\varrho\leq r\leq R\leq \bar R$. For some $0<\kappa<1$ we rewrite the above with the choice $r= \varrho^\kappa$ to have 
\begin{equation*}
\begin{aligned}
\int_{B_\varrho}|\X u - (\X u)_{B_\varrho}|\dx &\leq c \big(\varrho^{Q+(1-\kappa)\beta-\kappa\bar\eps} + \varrho^{\kappa(Q+\delta)}\big)
\bigg[\intav_{B_R}(|\X u|+s)\dx + \|f\|_{L^q}^{1/(p-1)}\bigg]\\
&\leq c\varrho^{Q+\gamma} \bigg[\intav_{B_R}(|\X u|+s)\dx + \|f\|_{L^q}^{1/(p-1)}\bigg],
\end{aligned}
\end{equation*}
where the latter inequality follows when $Q+\gamma \leq \min\{Q+(1-\kappa)\beta-\kappa\bar\eps\,,\, \kappa(Q+\delta)\}$; 
indeed we can make sure that this is true with the choice 
of $\kappa = \kappa(\gamma)$ such that 
$$ \frac{Q+\gamma}{Q+\delta} \,\leq\, \kappa \,\leq\, \frac{\beta-\gamma}{\beta+\bar\eps},$$
for any $0<\gamma< \beta\delta/(Q+\beta+\delta +\bar\eps)$. Also, note that if $\gamma, \bar\eps$ are small enough, $\kappa=\kappa(\gamma)$ can be chosen close enough to  $1$ and we can make sure $\varrho^\kappa\leq R$, whenever $0<\varrho<R$. 
Thus, we have obtained 
$$ \intav_{B_\varrho}|\X u - (\X u)_{B_\varrho}|\dx \leq c\varrho^\gamma
 \bigg[\intav_{B_R}(|\X u|+s)\dx + \|f\|_{L^q}^{1/(p-1)}\bigg],$$
for any $0<\varrho< R\leq \bar R$ 
and the proof is complete. 
\end{proof}

\subsection*{Acknowledgments}
S.M. was partially supported by 
%Analyysin ja dynamiikan huippuyksikk\"o (The Centre of Excellence in Analysis and Dynamics Research) and 
"Variationaaliset integraalit geometr" and GHAIA Marie Sk\l{}odowska-Curie grant agreement No 777822 under Horizon 2020. Y.S. is partially supported by the Simons foundation. This work was initiated at the occasion of a visit of S.M. at Johns Hopkins University, whose hospitality is acknowledged. 
The authors are thankful to Agnid Banerjee for fruitful discussions and suggestions.
%and to the anonymous referee for a careful review.

%******************************************************                                     BIB SETTINGS

\bibliographystyle{plain}
\bibliography{MyBib}

\end{document}